\theoremstyle{plain}
\newtheorem{theorem}{Theorem}[section]
\newtheorem*{notation}{Notation}
\theoremstyle{definition}
\theoremstyle{remark}
\renewcommand{\section}{\@startsection{section}{1}{0pt}%
  {1.5ex plus .2ex minus .2ex}%
  {1.0ex plus .2ex}%
  {\normalfont\large\bfseries\raggedright}}
\renewcommand{\subsection}{\@startsection{subsection}{2}{0pt}%
  {1.25ex plus .2ex minus .2ex}%
  {1.0ex plus .2ex}%
  {\normalfont\normalsize\bfseries\raggedright}}
\def\abstractname{Abstract}
\renewenvironment{abstract}{%
  \begin{center}
    \textbf{\abstractname}
  \end{center}
}{}
\begin{document}
\title[NSE-Plate Interaction PDE system]{Analysis and Numerical Approximation to Interactive Dynamics of Navier Stokes-Plate Interaction PDE System}
% ======================  Author Information ======================
\author[P. G. Geredeli]{Pelin G. Geredeli}
\address[Pelin G. Geredeli]{School of Mathematical and Statistical Sciences, 
                Clemson University, 
       Clemson, SC, 29634, USA}
\email[Pelin G. Geredeli]{pgerede@clemson.edu
}
\author[Q. Lin]{Quyuan Lin}
\address[Quyuan Lin]{School of Mathematical and Statistical Sciences, 
                Clemson University, 
       Clemson, SC, 29634, USA}
\email[Quyuan Lin]{quyuanl@clemson.edu}
\author[D. McKnight]{Dylan McKnight}
\address[Dylan McKnight]{Department of Mathematics and Statistics, 
                Colorado Mesa University,Grand Junction, CO 81501, USA}
\email[Dylan McKnight]{dmcknight@coloradomesa.edu}
\author[M. M. Rahman]{Mohammad Mahabubur Rahman}
\address[Mohammad Mahabubur Rahman]{School of Mathematical and Statistical Sciences, 
                Clemson University, 
       Clemson, SC, 29634, USA}
\email[Mohammad Mahabubur Rahman]{rahman6@clemson.edu}

\maketitle{} 

\begin{abstract}

We consider a Navier-Stokes fluid-plate interaction (FSI) system which describes the evolutions of the fluid contained within a 3D cavity, as it interacts with a deformable elastic membrane on the ``free" upper boundary of the cavity. These models arise in various aeroelastic and biomedical applications as well as in the control of ocular pressure, and sloshing phenomena. We analyze the well-posedness of weak solutions to the stationary ($\lambda$-parametrized) coupled PDE system by way of invoking the nonlinear generalization of the abstract variational formulations which was introduced in \cite{girault2012finite}, wherein an inf-sup approach is followed to show existence-uniqueness of solutions under a small data assumption. 

In addition, we provide a numerical approximation scheme of the infinite dimensional coupled system via a finite element method approximation (FEM). The numerical results use a standard conforming scheme and handle the introduced nonlinearities via Picard iterations. Numerical results are obtained for an appropriate test problem satisfying the necessary boundary conditions and coupling. Moreover, error bounds between the FEM and theoretical solution in terms of the characteristic mesh size are supplied in appropriate Sobolev norms which agree with the established literature. These FEM approximations of the coupled system with their associated error bounds validate the theoretical findings.

\vskip.3cm \noindent \textbf{Key terms:} Fluid-Structure Interaction, Weak Solution, Picard Iteration, FEM Approximation  

\vskip.3cm \noindent \textbf{Mathematics Subject Classification (MSC): 35B35, 35B40, 35G35}

\end{abstract}

\section{Introduction}
\vspace{0.1cm}

\noindent Over the past decade, there has been growing interest in the qualitative and quantitative study of partial differential equation (PDE) systems that describe the interaction of a fluid flow with a deformable elastic structure. These coupled systems arise in various aeroelastic and biomedical applications; e.g., the fluttering of an airplane/airfoil wing, the movement of wind
turbines and bridges, blood transportation processes within arterial walls, the control of ocular pressure, and the phenomenon of sloshing \cite{bermudez2003finite, barbu2007existence, bukavc2016nonlinear, amara2002bending,avalos2008new,avalos2007coupled,barbu2008smoothness,bodnar2014fluid,kukavica2010strong,chambolle2005existence,du2003analysis,hsu2011blood}. In particular, the fluid-structure interaction (FSI) system \eqref{1}-\eqref{40} below describes the vibrations of an elastic membrane on the ``free" upper boundary of a 3D cavity which is filled with Navier-Stokes fluid. We describe here our PDE model. \\

\noindent \textbf{Navier-Stokes Fluid-Plate Interaction PDE Model}
\vspace{0.3cm}

\noindent We consider a coupled PDE system which constitutes an interaction between a coupled Navier-Stokes fluid and Kirchoff plate equation on a given geometry (see Figure 1 below). Here the fluid domain $\Omega_f \subset \mathbb{R}^3$  will be a Lipschitz domain with boundary $\partial \Omega_f := \bar{\Omega}_p \cup \bar{S},$ and $\Omega_p \cap S=\emptyset$. Moreover, the active and flat part $\Omega_p \subset \mathbb{R}^2$ represents the equilibrium position of the elastic domain, where the interaction between the respective fluid and structure components takes place.  In particular, $\Omega_p \subset \{x\in \mathbb{R}^3: x=(x_1,x_2,0)\}.$ Also, the surface $S \subset \{x\in \mathbb{R}^3:  x = (x_1, x_2, x_3) : x_3 \leq 0 \}$ is considered to be the inactive portion of the domain, where no interaction takes place. 
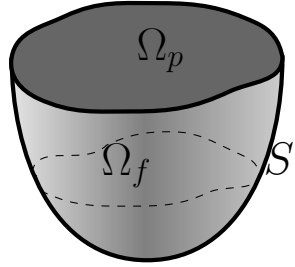
\begin{figure}[H]
%\begin{subfigure}[H]{0.3\linewidth}
\centering
\begin{tikzpicture}[scale=0.9]
\draw[left color=black!10,right color=black!20,middle
color=black!50, ultra thick] (-2,0,0) to [out=0, in=180] (2,0,0)
to [out=270, in = 0] (0,-3,0) to [out=180, in =270] (-2,0,0);

\draw [fill=black!60, ultra thick] (-2,0,0) to [out=80,
in=205](-1.214,.607,0) to [out=25, in=180](0.2,.8,0) to [out=0,
in=155] (1.614,.507,0) to [out=335, in=100](2,0,0) to [out=270,
in=25] (1.214,-.507,0) to [out=205, in=0](-0.2,-.8,0) [out=180,
in=335] to (-1.614,-.607,0) to [out=155, in=260] (-2,0,0);

\draw [dashed, thin] (-1.7,-1.7,0) to [out=80, in=225](-.6,-1.3,0)
to [out=25, in=180](0.35,-1.1,0) to [out=0, in=155] (1.3,-1.4,0)
to [out=335, in=100](1.65,-1.7,0) to [out=270, in=25] (0.9,-2.0,0)
to [out=205, in=0](-0.2,-2.2,0) [out=180, in=335] to (-1.514,-2.0)
to [out=155, in=290] (-1.65,-1.7,0);

\node at (0.2,0.1,0) {{\LARGE$\Omega_p$}};

\node at (1.95,-1.5,0) {{\LARGE $S$}};

\node at (-0.3,-1.6,0) {{\LARGE $\Omega_f$}};
\end{tikzpicture}

%\caption{Edge Domain}
\caption{Fluid-Structure Interaction Geometry }
\end{figure}
\noindent The PDE system under consideration is:
\begin{equation}
\label{1}
\begin{cases}
u_t - \nu \Delta u +(u \cdot \nabla)u+ \nabla p = f & \text{in} \ \Omega_f \times (0, T), \\
\nabla \cdot u = 0 & \text{in} \ \Omega_f \times (0, T), \\
u = 0 & \text{on} \ S.
\end{cases}
\end{equation}
\begin{align}
\label{2}
\begin{cases}
w_{tt} - \rho \Delta w_{tt} + \Delta^2 w =\frac{1}{2}u_3^2 +p \rvert_{\Omega_p} & \text{in} \ \Omega_p \times (0, T), \\
 w = \frac{\partial w}{\partial n_p} = 0 & \text{on} \ \partial \Omega_p,
\end{cases}
\end{align}
\begin{align}
\label{3}
 u = [u_1, u_2, u_3] = [0, 0, w_t] \quad \text{on} \ \Omega_p \times (0, T),
 \end{align}
 \begin{align}
 \label{40}
[u(0),w(0),w_t(0)]=[u_0,w_0,w_{1_0}].
\end{align}
Here, variables $ u(x, t),$ $p(x, t),$ and $w(x, t)$ represent the fluid velocity, fluid pressure, and plate displacement, respectively. Also, $\nu\geq 0$ is the constant viscosity of the fluid and $f(x, t)$ denote the body force acting on it. For simplicity, we take $f = 0$ throughout the analysis. The constant $\rho \geq 0$ is a rotational inertia parameter which is proportional to the square of the plate's thickness. When  $\rho = 0$, it signifies the absence of rotational dynamics in the plate (i.e., the thickness of the plate can be ignored). In addition, $n_p$ is the unit normal vector exterior to $\partial \Omega_p$.\\

\medskip

\noindent\textbf{Plan of the Paper and Notation}\\ \\
  In Section 1, we introduce the Navier Stokes fluid-plate interaction PDE system and describe the geometry of the model. In Section 2, we review the literature with relevant previous works. In Section 3, we discuss the novelties of our current manuscript and key challenges we encounter during the theoretical and numerical analysis of the associated PDE system. In Section 4, we describe the abstract spaces and present the preliminary concepts pertinent to our study. Moreover, we define the static $\lambda$-parametrized PDE system which we invoke for our numerical computation. In Section 5, we state our main results concerning the existence and uniqueness of solutions to the static Navier-Stokes fluid-plate interaction PDE system, and provide the detailed proofs. Section 6 is devoted to the numerical implementation, where we demonstrate the stability of the model. Finally, in Appendix, we present an abstract framework that we utilize to establish the existence and uniqueness of weak solutions to the static PDE system. 
\begin{notation}
For the remainder of the text, norms $||\cdot ||_{D}$ are taken to be $%
L^{2}(D)$ for the domain $D$. Inner products in $L^{2}(D)$ are written as $%
(\cdot ,\cdot )_{D}$, and the inner products $L^{2}(\partial D)$ are written as $%
( \cdot ,\cdot )_{\partial D} $. The space $H^{s}(D)$ will denote the Sobolev
space of order $s$ and $H_{0}^{s}(D)$ denotes the
closure of $C_{0}^{\infty }(D)$ in the $H^{s}(D)$ norm, which we denote by $%
\Vert \cdot \Vert _{H^{s}(D)}$ or $\Vert \cdot \Vert _{s,D}$. 
For the sake of simplicity, we denote $$\mathbf{L^2}(\Omega_f)=[L^{2}(\Omega _{f})]^3,~~\mathbf{H^1}(\Omega_f)=[H^{1}(\Omega _{f})]^3,~~\mathbf{H}_0^2(\Omega_p)=[H_0^{2}(\Omega _{p})]^2.$$ 
Also, we make use of the standard notation for the trace of functions defined on a Lipschitz 
domain $D$, i.e., for a scalar function $\phi \in H^{1}(D)$, we denote $%
\gamma (w)$ to be the trace mapping from $H^{1}(D)$ to $H^{1/2}(\partial D)$%
. %That is, $\gamma (w)=w\rvert_{\partial D},$ for $w\in C^{\infty}(D).$ 
We denote pertinent duality pairings as $(\cdot ,\cdot
)_{X\times X^{\prime }}$, the space $\widehat{\mathbf{L}^2}(\Omega)$ as 
\begin{align*}
\widehat{\mathbf{L}^2}(\Omega) := \{ u \in \mathbf{L}^2(\Omega) \mid \int_{\Omega} u(x') \, dx' = 0 \},
\end{align*}  
and also the Sobolev space $\widehat{\mathbf{H}^s_0}(\Omega) := \mathbf{H}_0^s(\Omega) \cap \widehat{\mathbf{L}^2}(\Omega),~ \text{for } s >0.$
\end{notation} 
\section{Literature}
%\vspace{0.1cm}

\noindent The model of interest describes the interaction between a three-dimensional homogeneous viscous incompressible fluid and a two-dimensional thin elastic plate. The system is well-established in both physical and mathematical literature \cite{avalos2014mixed,chueshov2011global2}. Notably, these mathematical models and other FSI PDE dynamics are closely linked to the modeling of problems in aeroelasticity, biomedical applications, --particularly the study of eye mechanics \cite{bodnar2014fluid,hsu2011blood,maklad2018fluid}--, and the phenomenon known as ``sloshing" \cite{ibrahim2005liquid}. 

The linearized version--i.e., Stokes fluid flow rather than Navier-Stokes flow-- of the particular FSI model in \eqref{1}-\eqref{40}, was initially studied in \cite{chueshov2011global2}, where the authors established the well-posedness via the Galerkin method, and proved the existence of a compact finite-dimensional global attractor in the case that the plate equation is under the influence of the von Karman nonlinearity.  Moreover, in \cite{avalos2014mixed}, the authors invoked a strongly continuous semigroup approach for the linear FSI system under consideration and derived a variational formulation to show the semigroup well-posedness of the PDE system. At this point we should note that these methodologies are not applicable for the analysis of the current nonlinear PDE system due to the presence of the Navier-Stokes nonlinearity $(u\cdot\nabla)u$ and subsequent lack of hyper dissipativity.  

On the other hand, the body of work with respect to the quantitative properties and numerical analysis of such 3D Stokes/Navier-Stokes flow and 2D plate interaction FSI systems is not extensive \cite{barbu2007existence,burman2022stability,kuberry2013decoupling,kukavica2010strong,geredeli2024partitioning}. Numerical techniques for FSI problems with moving or non-moving interfaces include monolithic or partitioned methods \cite{barbu2007existence,barbu2008smoothness, burman2022stability, kuberry2013decoupling, kukavica2010strong}. With respect to these methods, a monolithic approach is not desirable for the system \eqref{1}-\eqref{40}, since the fluid and structure problems require different levels of solution regularity at the finite energy level, i.e., $H^1$-fluid test functions versus $H^2$-plate test functions. Moreover, the presence of the biharmonic operator in the plate dynamics presents a numerical challenge that arises from the necessity of utilizing finite elements of class $C^1$ for conforming FEM approximation, while a conforming FEM for fluid velocity can be obtained via $C^0$ class elements. However, in a recent work \cite{geredeli2024partitioning}, the authors 
considered the ``linearized" version (with Stokes flow instead of Navier-Stokes) of the PDE model in \eqref{1}-\eqref{40}, and used a domain decomposition approach to solve respective fluid and plate subproblems. Subsequently, the authors in \cite{geredeli2024partitioning} generated a time discretization scheme to approximate the solutions to a time dependent Stokes-plate interaction PDE problem, and established the stability of their numerical scheme.

\section{Novelty and Challenges Encountered}

\noindent With the objective of understanding the qualitative and quantitative properties of the nonlinear FSI system in \eqref{1}-\eqref{40}, we develop an efficient algorithm to numerically approximate the corresponding solutions. This requires one to establish the well-posedness of the coupled static PDE system \eqref{var}, after the time derivatives of solution variables are written (informally) as finite difference quotients. To this end, we will appeal to the abstract theory introduced in \cite{girault2012finite}, specially crafted for nonlinear mixed variational problems, wherein an inf-sup approach is followed to show existence-uniqueness of solutions under a small data assumption. The primary challenge in our analysis is to have precise control of the Navier-Stokes nonlinearity $(u\cdot \nabla)u$ in the fluid PDE component with a view of deriving a mixed variational formulation to solve the solution variables of the entire PDE system \eqref{var}. In this regard, the main challenges associated with the analysis and the novelties are as follows:\\ 

1) \underline{Lack of a direct elimination of pressure $p(x,t)$:} The coupling of the parabolic and hyperbolic dynamics is given via both the matching velocity terms $u_3|_{\Omega_p} = w_t,$ and the flux of the fluid (which is essentially the Dirichlet trace of the fluid pressure $p|_{\Omega_p})$, as opposed to the matching fluxes of the fluid and plate dynamics. This prohibits a meaningful application of the Leray projector-- which is standard for uncoupled Navier-Stokes equations with homogeneous boundary condition-- to eliminate the pressure. Moreover, the Navier-Stokes nonlinearity in the fluid PDE component does not allow for identification of the pressure as the solution of inhomogeneous BVP (in the style of  \cite{avalos2014mixed}), with the boundary data depending on fluid velocity and plate variables. Therefore, we alternatively consider the pressure $p(x, t)$ as a solution        variable to be directly solved within an intrinsic weak variational formulation \eqref{var}. In this connection, the pressure variable appears as a constraint in our formulation and its Dirichlet trace  $p|_{\Omega_p}$ is eventually recovered through this mixed formulation.\\

2) \underline{Formulation of nonstandard mixed variational problem:} As we said, the analysis the quantitative properties and numerical approximations of 3D Stokes/Navier-Stokes flow and 2D plate interaction FSI systems is not extensive. Moreover, the body of work with respect to the well-posedness of the static FSI model in (\ref{var}) was shown for strictly linearized versions--i.e., Stokes fluid flow rather than Navier-Stokes flow--or for nonlinear versions of \eqref{1}-\eqref{40} in which the nonlinearity appears \textit{only} strictly in the plate component \cite{chueshov2011global2, avalos2014mixed}. While \cite{chueshov2011global2} employed a Galerkin approximation for the Stokes-nonlinear plate interaction FSI model, \cite{avalos2014mixed} invoked a strongly continuous semigroup approach for the fully linear FSI system. However, these methodologies are not applicable for the analysis of nonlinear FSI due to the presence of the Navier-Stokes nonlinearity $(u\cdot\nabla)u.$ In this regard, one of our objectives is to derive a mixed variational approach to obtain the well-posedness of the static coupled PDE system \eqref{var}. This mixed formulation will lead to the derivation of a numerical approximation scheme. For this, we reformulate the static ($\lambda$-parametrized) problem in a ``nonstandard" variational form and appeal to the abstract framework introduced for \underline{nonlinear} PDE models  \cite{girault2012finite}. Our major challenges here are to (i) show the ellipticity and weak continuity condition for the ``nonlinear mapping" $A(\cdot; \cdot,\cdot)$ (the associated ``trilinear form"), and (ii) prove the inf-sup condition for the bilinear form $B(\cdot,\cdot)$ which models the constraint. Moreover, for the uniqueness of the solution, we derive some necessary size conditions on the problem data. \\

3) \underline{Semi-discrete FEM Approximation:} We will invoke the mixed variational formulation derived in \cite{avalos2014mixed} above in the derivation of a companion mixed FEM approximation scheme to approximate the solutions to \eqref{1}-\eqref{40}. This is done through a combination of $\mathbb{P}2/\mathbb{P}1$ Taylor-Hood elements and appropriately $H^2$ conforming structure elements. In this connection, our major challenge is the presence of two non-linear terms, the Navier-Stokes nonlinearity, $(u\cdot\nabla)u$, and its associated flux term on the plate component, $\frac{1}{2}u_3^2$. To address these issues, we appeal to a Picard iteration scheme (ostensibly an Oseen approximation of Navier-Stokes) which allows us to approximate the advective terms with information gained in the previous iteration. Additionally, as stated above, the pressure $p(x,t)$ is not eliminated from the plate component. This leads to a data structure translating between the structure and pressure elements, which allows us to compute the appropriate integrals using fundamental properties of finite element bases.

\section{Preliminaries}
As mentioned above, our main goal is to show the existence-uniqueness (under a small data assumption) of the weak solutions to the static version of the PDE system \eqref{1}-\eqref{40} and establish finite element method (FEM) approximations to the corresponding solution. To this end, in addition to the notation given above we also introduce the following spaces for the fluid and plate solution variables of the coupled system \eqref{1}-\eqref{40}: for the fluid velocity we define 
\begin{align*}
U = \{ v = (v_1, v_2, v_3) \in \mathbf{H}^1(\Omega_f) : v_1 = v_2 = 0 \text{ on } \Omega_p, \, v = 0 \text{ on } S \}.
\end{align*}  
For the appropriate treatment of the plate solution variables, we use the fact that $\widehat{\mathbf{H}^2_0}(\Omega_p)\subset \mathbf{H}_0^2(\Omega_p)$, and we critically utilize the following decomposition described in \cite{chueshov2011global1}:
\begin{align}\mathbf{H}_0^2(\Omega_p)=\widehat{P}\mathbf{H}_0^2(\Omega_p)\oplus (I-\widehat{P})\mathbf{H}_0^2(\Omega_p), \label{4.5g}\end{align} where $\widehat{P}:\mathbf{H}_0^2(\Omega_p)\rightarrow \widehat{\mathbf{H}^2_0}(\Omega_p)$ is the orthogonal projection with respect to the inner product $(\Delta(\cdot),\Delta(\cdot))_{\Omega_p},$ and and so a fortiori, $$(I-\widehat{P})\mathbf{H}_0^2(\Omega_p)=span\{\xi \in \mathbf{H}_0^2(\Omega_p): \Delta^2 \xi=1; ~~\xi = \frac{\partial \xi}{\partial n_p} = 0\}.$$ Subsequently, with respect to the above notation, we introduce our state space
\begin{align*}
\Sigma = \{ (\phi, \psi) \in U \times \widehat{\mathbf{H}^2_0}(\Omega_p) : \phi_3\rvert_{\Omega_p} = \psi \},
\end{align*}
with the associated norm
\begin{align*}
\lVert (u, v) \rVert_{\Sigma} = \lVert u \rVert_{U} + \lVert v \rVert_{\widehat{\mathbf{H}^2_0}(\Omega_p)}. 
\end{align*}  
With the objective of understanding the quantitative properties of the coupled PDE system \eqref{1}-\eqref{40} and generating a time-dependent finite element method (FEM) scheme, we appeal to the abstract difference approximation method (see \cite{showalter1996monotone}, page 127). The underlying idea is to approximate the time component of PDE system \eqref{1}-\eqref{40} by finite-difference equations. That is, the following finite difference quotients are invoked to approximate the time derivatives of the fluid and plate variables for $\epsilon>0$ sufficiently small:
%and dual space of $\Sigma$ is defined as
%\[
%\lVert (u, v) \rVert_{\Sigma^{'}} = \sup_{(\phi, \psi) \in \Sigma} \frac{(( u,v), (\phi, \psi))}{\lVert (\phi, \psi) \rVert_{\Sigma}}.
%\]
\begin{align}
\label{0.14c}
u_t(t) = \frac{u(t) - u(t - \epsilon)}{\epsilon} = \frac{1}{\epsilon}\big[ u(t) - u(t - \epsilon)\big] = \lambda u - u^{\ast}.
\end{align}
Here, $\lambda =\frac{1}{\epsilon},$ and $u^{\ast} = \frac{1}{\epsilon} u(t - \epsilon)\in U$ is assumed to be known since it involves ``preceding times". For the plate components, by using a change of variable and similarly applying a finite difference quotient we get
\begin{align}
\label{0.14cc}
&w_1:=w,\nonumber\\
&w_2:=w_t=(w_1)_t \to \lambda w_1-w_1^*,\nonumber\\
&w_{tt} = (w_t)_t = (w_2)_t \to \lambda w_2 - w_2^{\ast}, 
\end{align}
where $\{w_1^*,w_2^*\}\in \mathbf{H_0^2}(\Omega_p)\times \mathbf{H^1}(\Omega_p).$ Subsequently, approximating the original PDE model \eqref{1}-\eqref{40} via \eqref{0.14c}-\eqref{0.14cc}, one can derive the following $\lambda$-parametrized static PDE system:
\begin{align}
\begin{cases}
&\lambda u -\nu \Delta u + (u \cdot \nabla) u + \nabla p_0 = u^{\ast}~~~~~~~~~~~ \text{in} \ \Omega_f, \\
&\nabla \cdot u = 0~~~~~~~~ \text{in} \ \Omega_f, \\
&w_1 = \frac{1}{\lambda} w_2 + \frac{1}{\lambda} w_1^*~~~~~~~~~~ \text{in} \ \Omega_p, \\
&\lambda w_2 - \rho \Delta (\lambda w_2) + \frac{1}{\lambda}\Delta^2(w_2) = p|_{\Omega_p} + \frac{1}{2} u_3^2|_{\Omega_p}+w_2^{\ast}- \rho \Delta (w_2^{\ast})-\frac{1}{\lambda}\Delta^2(w^*_1)~~~~~ \text{in} \ \Omega_p, \\
&w_1 = \frac{\partial w_1}{\partial n_p} = 0\quad \text{on}~~~~ \partial \Omega_p, \\
&u = 0~~~~ \text{on } S, \\
&[u_1,u_2,u_3] = [0,0,w_2] ~~~ \text{on } \Omega_p,
\end{cases}
\label{2.4}
\end{align}
where the associated fluid pressure $p=p_0+c_0$ with $p_0 \in \widehat{L}^2(\Omega_f)$ and $c_0 \in \mathbb{R}$. 

\section{Main Result I: well-posedness of the \underline{static} Navier-Stokes fluid-plate interaction PDE System \eqref{1}-\eqref{40}}
\noindent This section is devoted to establishing the existence and uniqueness (under small data assumption) of the solution to the $\lambda-$parametrized static PDE system \eqref{2.4}. For this, we appeal to a methodology to solve general nonlinear variational formulations which were initially introduced in \cite{girault2012finite} for static ``uncoupled Navier-Stokes equations" wherein an inf-sup approach is followed to show existence-uniqueness of solutions under a small data assumption. Our main result is given as follows:
\begin{theorem}
\label{Th0.1}
With reference to the system \eqref{2.4}, let $ u^* \in U $, $ \{w_1^*,w_2^*\} \in \mathbf{H_0^2}(\Omega_p) \times \mathbf{H^1}(\Omega_p) $. Then, there exists at least one  solution $ \{(u, w_2),w_1, p\} \in \Sigma \times \widehat{\mathbf{H}^2_0}(\Omega_p)\times \mathbf{L}^2(\Omega_f)$ of \eqref{2.4}. Moreover, this solution is unique if  
\begin{align} ( \lVert u^*\rVert_{\mathbf{L}^2(\Omega_f)}^2 +  \lVert w_2^*\rVert_{\mathbf{L}^2(\Omega_p)}^2 + \rho \lVert \nabla w_2^*\rVert_{\mathbf{L}^2(\Omega_p)}^2  + \lVert \Delta w_1^*\rVert_{\mathbf{L}^2(\Omega_p)}^2) \leq \widetilde C \min\{\lambda^{\frac32}\nu^{\frac52}, \rho \lambda^4\},
\end{align}
where $\nu>0$, $\lambda>0$, $\rho>0$, and $\widetilde{C}>0$ is some constant independent of $\nu,\lambda,\rho$. 
\end{theorem}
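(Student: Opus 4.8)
The strategy is to recast the $\lambda$-parametrized static system \eqref{2.4} as a nonlinear mixed variational problem of the form treated in the abstract framework of the Appendix (following \cite{girault2012finite}), and then verify the hypotheses of that framework one by one. First I would choose the product space $X := \Sigma \times \widehat{\mathbf{H}^2_0}(\Omega_p)$ for the velocity--plate unknown $((u,w_2),w_1)$ (the kinematic constraint $\phi_3|_{\Omega_p}=\psi$ is already built into $\Sigma$, and the relation $w_1 = \tfrac1\lambda w_2 + \tfrac1\lambda w_1^*$ is affine so $w_1$ is really a slaved variable), and the multiplier space $M := \mathbf{L}^2(\Omega_f)$ (or $\widehat{L}^2(\Omega_f)$, with the constant $c_0$ recovered separately) for the pressure $p$, which enforces the incompressibility constraint $\nabla\cdot u = 0$. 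The weak form is obtained by multiplying the fluid equation by a test function $\phi\in U$ with $\phi_3|_{\Omega_p}=\psi$, multiplying the plate equation by $\psi$, integrating by parts, and adding: the boundary term $\int_{\Omega_p} p|_{\Omega_p}\,\psi$ coming from the fluid stress exactly cancels the pressure-trace forcing on the right of the plate equation, so the pressure enters the combined left-hand side only through $B((\phi,\psi),q) = -(\nabla\cdot\phi, q)_{\Omega_f}$. This is precisely the point emphasized in the "Novelty" section — the pressure is kept as a genuine unknown/multiplier rather than eliminated.

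With this setup the nonlinear map is
\[
A((u,w_1);(\phi,\psi)) = \lambda(u,\phi)_{\Omega_f} + \nu(\nabla u,\nabla\phi)_{\Omega_f} + b(u;u,\phi) + \lambda(w_2,\psi)_{\Omega_p} + \rho\lambda(\nabla w_2,\nabla\psi)_{\Omega_p} + \tfrac1\lambda(\Delta w_2,\Delta\psi)_{\Omega_p} - \tfrac12(u_3^2,\psi)_{\Omega_p},
\]
where $b(u;v,\phi) = ((u\cdot\nabla)v,\phi)_{\Omega_f}$ is the usual trilinear form and $w_2$ is identified with $u_3|_{\Omega_p}$. I would then check, in order: (i) \emph{continuity/boundedness} of $A$ and of $B$ on the relevant Sobolev norms — routine, using Sobolev embedding $\mathbf{H}^1(\Omega_f)\hookrightarrow \mathbf{L}^4(\Omega_f)$ and the trace $\mathbf{H}^1(\Omega_f)\to H^{1/2}(\Omega_p)\hookrightarrow L^4(\Omega_p)$ to bound the $u_3^2$ term; (ii) the \emph{inf-sup} (LBB) condition for $B$ on $X\times M$, i.e. $\inf_{q}\sup_{(\phi,\psi)} B((\phi,\psi),q)/(\|(\phi,\psi)\|_X\|q\|_M) \ge \beta > 0$ — here one uses the classical Bogovskii-type right inverse of the divergence on $\Omega_f$, choosing the lift to vanish near $\Omega_p$ so that it lies in $U$ with $\phi_3|_{\Omega_p}=0$, hence $(\phi,0)\in\Sigma$ is admissible; (iii) \emph{ellipticity} of $A$ on the kernel $V = \{(\phi,\psi)\in\Sigma : \nabla\cdot\phi = 0\}$: here the good linear terms give $\lambda\|\phi\|_{\Omega_f}^2 + \nu\|\nabla\phi\|_{\Omega_f}^2 + \lambda\|\psi\|_{\Omega_p}^2 + \rho\lambda\|\nabla\psi\|_{\Omega_p}^2 + \tfrac1\lambda\|\Delta\psi\|_{\Omega_p}^2$, the trilinear term $b(\phi;\phi,\phi)$ vanishes on $V$ by the divergence-free/boundary structure (the velocity $\phi$ is zero on $S$ and has only the normal component $\psi$ on $\Omega_p$, so the boundary flux $\int_{\partial\Omega_f}|\phi|^2(\phi\cdot n)$ needs care — this is where the combined fluid+plate test function matters, and one shows the residual boundary contribution is absorbed by $\tfrac12(\phi_3^2,\psi)_{\Omega_p} = \tfrac12\int_{\Omega_p}|\phi_3|^2\phi_3$, which is exactly the $u_3^2$ nonlinearity, so the two cancel); (iv) \emph{weak continuity / local Lipschitz} of $A$ in the first argument on bounded sets, again via compact Sobolev embeddings on $\Omega_f$ and on the trace.

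Having verified these, the abstract theorem from the Appendix yields existence of at least one solution $\{(u,w_2),w_1,p\}\in \Sigma\times\widehat{\mathbf{H}^2_0}(\Omega_p)\times\mathbf{L}^2(\Omega_f)$. For \emph{uniqueness} under the smallness condition, I would assume two solutions, subtract, test the difference equation with the difference itself; the linear + constraint terms give a coercive lower bound $\gtrsim \min\{\lambda\nu,\ \rho\lambda,\ \lambda^{-1}\}$ times the squared difference norm, while the two nonlinear contributions $b(u^{(1)};u^{(1)},\cdot)-b(u^{(2)};u^{(2)},\cdot)$ and $\tfrac12(u_3^{(1)2}-u_3^{(2)2},\cdot)$ are estimated by $C\|u^{(1)}\|$-type factors times the squared difference norm. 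A standard \emph{a priori} bound (test the equations with the solution itself, using ellipticity) controls $\|u\|$, $\|w_2\|$ in terms of the data norm $D := \|u^*\|^2 + \|w_2^*\|^2 + \rho\|\nabla w_2^*\|^2 + \|\Delta w_1^*\|^2$ divided by the coercivity constant; plugging this in, uniqueness follows whenever $D$ is below a threshold that scales like $\min\{\lambda^{3/2}\nu^{5/2},\ \rho\lambda^4\}$, matching the stated bound. \textbf{The main obstacle} I anticipate is step (iii) — proving genuine ellipticity of $A$ on $V$: one must show that the Navier-Stokes convective term together with the non-sign-definite flux term $-\tfrac12(u_3^2,\psi)_{\Omega_p}$ combine to something nonnegative (or at least lower-order absorbable), which forces a careful integration by parts of $b$ using $\nabla\cdot\phi=0$, $\phi|_S=0$, $\phi_1=\phi_2=0$ on $\Omega_p$, and $\phi_3|_{\Omega_p}=\psi$, tracking exactly which boundary integral survives — and then checking that the surviving term is killed by the plate nonlinearity. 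The precise bookkeeping of the powers of $\lambda$, $\nu$, $\rho$ in the coercivity constant (needed to land on $\min\{\lambda^{3/2}\nu^{5/2},\rho\lambda^4\}$) is the other delicate piece.
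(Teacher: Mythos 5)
Your proposal follows essentially the same route as the paper: cast the system as a nonlinear mixed variational problem in the Girault--Raviart framework, verify ellipticity via the cancellation of the convective boundary flux against the plate nonlinearity $\tfrac12 u_3^2$, check weak sequential continuity by compactness, obtain the inf-sup condition through a Bogovskii-type divergence lift, recover $w_1$ and the pressure constant $c_0$ afterward, and prove uniqueness by combining the a priori estimate with a difference argument to land on the stated smallness threshold. The only cosmetic difference is your choice of the divergence lift with $\psi=0$ (vanishing trace on $\Omega_p$) versus the paper's choice $\psi = \lVert q\rVert\tilde\psi$, but both yield the inf-sup constant in the same way.
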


\begin{proof}
The proof of Theorem \ref{Th0.1} is based on the abstract framework introduced in \cite{girault2012finite}. For the reader's convenience, we give this framework in detail in Appendix. Our modus operandi is to generate and solve a variational formulation in solution variables $\{u,w_2\}$ and the pressure component $p_0$ of $p=p_0+c_0$, and then recover the solution variable $w_1$ via the relation \eqref{2.4}$_3$ and the constant component $c_0$ via the plate equation \eqref{2.4}$_4.$ \\

\noindent We begin with multiplying the equations \eqref{2.4}$_1$ and \eqref{2.4}$_4$ by the test functions $(\phi,\psi)\in \Sigma,$ and integrating by parts. This gives
\begin{align}
\label{3.5z}
\begin{cases}
    \lambda ( u, \phi )_{\Omega_f} + \nu ( \nabla u, \nabla \phi )_{\Omega_f} 
    + ( (u \cdot \nabla) u, \phi)_{\Omega_f} + ( p_0, \phi)_{\Omega_p} 
    - ( p_0, \text{div} \, \phi )_{\Omega_f}  
    =  ( u^{\ast}, \phi )_{\Omega_f},\\ \vspace{0.3cm}   
    \lambda ( w_2, \psi )_{\Omega_p} 
    + \lambda ( \rho \nabla w_2, \nabla \psi )_{\Omega_p} 
    + \frac{1}{\lambda} ( \Delta w_2, \Delta \psi )_{\Omega_p} =  ( p_0, \psi )_{\Omega_p}+( c_0, \psi )_{\Omega_p} + \frac{1}{2} ( u_3^2, \psi )_{\Omega_p} \\
    \hspace{80mm}+ ( w_2^{\ast}, \psi )_{\Omega_p} + \rho ( \nabla w_2^{\ast}, \nabla \psi )_{\Omega_p}  - \frac{1}{\lambda} ( \Delta w_1^*, \Delta \psi )_{\Omega_p}.
\end{cases}
\end{align}
Adding the equations in \eqref{3.5z} and taking into account that $(\phi, \psi) \in \Sigma $ (hence $( c_0, \psi )_{\Omega_p}=0$), we obtain the following variational formulation
\begin{align}
\label{var}
\begin{cases}
 A((u, w_2); (u, w_2), (\phi, \psi)) + B((\phi, \psi), p_0) = F((\phi, \psi)),~~ \forall (\phi,\psi)\in \Sigma, \\
 B((u, w_2), q) = 0,~~ \forall q\in \mathbf{L}^2({\Omega_f}).
\end{cases}
\end{align}
Here, $ A(\cdot; \cdot, \cdot) : \Sigma \times \Sigma \times \Sigma \to \mathbb{R}$ is defined as
\begin{align}
\label{A}
A((v, z); (\bar{v}, \bar{z}), (\eta, \chi)) = A_0((\bar{v}, \bar{z}), (\eta, \chi)) + A_1((v, z); (\bar{v}, \bar{z}), (\eta, \chi)),
\end{align}  
where $A_0:\Sigma \times \Sigma \to \mathbb{R}$ is the bilinear form
\begin{align}
A_0((\bar{v}, \bar{z}), (\eta, \chi)) = \lambda ( \bar{v}, \eta )_{\Omega_f} + \nu ( \nabla \bar{v}, \nabla \eta )_{\Omega_f} + \lambda ( \bar{z}, \chi )_{\Omega_p} + \lambda \rho ( \nabla \bar{z}, \nabla \chi)_{\Omega_p} + \frac{1}{\lambda} ( \Delta \bar{z}, \Delta \chi )_{\Omega_p},
\label{11.2F}
\end{align} 
and $ A_1: \Sigma \times \Sigma \times \Sigma \to \mathbb{R}$ is defined as a trilinear form given by  
\begin{align}
A_1((v, z); (\bar{v}, \bar{z}), (\eta, \chi)) = ( (v \cdot \nabla) \bar{v}, \eta)_{\Omega_f} - \frac{1}{2}  ((v \cdot n) \bar{v}_3, \chi)_{\Omega_p}.
\label{11.3}
\end{align}
Note that by the Sobolev embedding Theorem and Trace Theorem - i.e., $v\rvert_{\Omega_p},\bar{v}\rvert_{\Omega_p} \in H^{\frac{1}{2}}(\Omega_p)\subset L^4(\Omega_p)$ - the trilinear form $A_1(.;.,.)$ is well-defined. In addition, the bilinear form $ B : \Sigma \times \widehat{\mathbf{L}}^2(\Omega_f) \to \mathbb{R}$ is defined as  
\begin{align*}
B((\eta, \chi), l) = -( l, \operatorname{div} \, \eta )_{\Omega_f},
\end{align*} 
and the function $ F : \Sigma \to \mathbb{R}$ is given by
\begin{align*}
F(\eta, \chi) =  ( u^{\ast}, \eta )_{\Omega_f} + ( w_2^{\ast}, \chi )_{\Omega_p} + \rho ( \nabla w_2^{\ast}, \nabla \chi )_{\Omega_p} - \frac{1}{\lambda} ( \Delta w_1^{\ast}, \Delta \chi )_{\Omega_p}.
\end{align*}  
Now, in order to appeal Theorem \ref{ex} in Appendix, we need to show that assumptions \textbf{(A.1)-(A.3)} are satisfied for our variational problem \eqref{var}. For this, we continue with the following steps:\\

\noindent \textbf{\underline{Step I:}} To verify assumption \textbf{(A.1)}, we first recall the map $A(\cdot;\cdot,\cdot)$ defined in \eqref{A}. For any given $[u, w_2] \in \Sigma$, we then obtain
\begin{align}
A((u, w_2); (u, w_2), (u, w_2))=
&\lambda (u, u)_{\Omega_f} + \nu (\nabla u, \nabla u)_{\Omega_f} + \lambda (w_2, w_2)_{\Omega_p} + \lambda \rho (\nabla w_2, \nabla w_2)_{\Omega_p}
\nonumber\\
&
+ \frac{1}{\lambda} (\Delta w_2, \Delta w_2)_{\Omega_p}
+ ((u \cdot \nabla) u , u)_{\Omega_f} - \frac{1}{2} ((u \cdot \nu) u_3, w_2 )_{\Omega_p}
\label{12ba}
\end{align}
 Integrating by parts in \eqref{12ba} with respect to the $\Omega_f$ trilinear term and using the boundary condition $u_3\rvert_{\Omega_p} = w_2$, we obtain 
\begin{align}
A((u, w_2); (u, w_2), (u, w_2))&= \lambda \int_{\Omega_f} \lvert u\rvert^2 \, dx + \nu \int_{\Omega_f} \lvert \nabla u\rvert^2 \, dx + \lambda \int_{\Omega_p} \lvert w_2\rvert^2 \, dx 
\nonumber\\
&
+ \lambda \rho \int_{\Omega_p} \lvert \nabla w_2\rvert^2 \, dx + \frac{1}{\lambda} \int_{\Omega_p} \lvert \Delta w_2\rvert^2 \, dx
\nonumber\\
& \geq C \big(\lVert u \rVert_{\mathbf{H^1}(\Omega_f)}^2 + \lVert w_2 \rVert_{\mathbf{H^2}(\Omega_p)}^2\big)
\nonumber\\
&\geq C \lVert (u,w_2)\rVert_{\Sigma}^{2},
\end{align}
where $C=C(\lambda, \rho)>0$ is a constant. Therefore, \textbf{(A.1)} holds for problem \eqref{var}.\\

\noindent \textbf{\underline{Step II:}} To continue with the verification of \textbf{(A.2)}: Let $\{ (u_m, w_{2m}) \} \subset \Sigma$ and $(u, w_2) \in \Sigma$
satisfy 
\begin{align}(u_m, w_{2m}) \rightharpoonup (u, w_2)\; (\textrm{weakly})~ in~  \Sigma.
\label{4.3r}
\end{align}
By \eqref{4.3r} and the definition of $A_0$ in \eqref{11.2F}, it is clear that
\begin{align}
\lim_{m \to \infty} A_0((u_m,w_{2m}), (\phi, \psi)) = A_0((u,w_2), (\phi, \psi)).\label{a}
\end{align}
To see the weak continuity of the trilinear form $\{A_1((u_m,w_{2m}); (u_m,w_{2m}),(\phi,\psi))\}$ in \eqref{11.3}, we integrate by parts and use the fact $\phi_3\rvert_{\Omega_p}=\psi$, to obtain 
\begin{align}
A_1((u_m, w_{2m}); (u_m, w_{2m}), (\phi, \psi)) 
&= - \sum_{i,j=1}^3 \int_{\Omega_f} u_{mi} u_{mj} \frac{\partial \phi_j}{\partial x_i} \, d\Omega_f + \sum_{i,j=1}^3 \int_{\partial \Omega_f} u_{mj} u_{mi} \phi_j n_i \, d (\partial \Omega_f) 
\nonumber\\
&
- \big( \frac{1}{2} (u_{m})_3^2, \psi \big)_{\Omega_p}
\nonumber\\
&= - \sum_{i,j=1}^3 \int_{\Omega_f} u_{mi} u_{mj} \frac{\partial \phi_j}{\partial x_i} \, d\Omega_f + ((u_{m})_3^2, \psi)_{\Omega_p} - \big( \frac{1}{2} (u_{m})_3^2, \psi \big)_{\Omega_p}
\nonumber\\
&= - \sum_{i,j=1}^3 \int_{\Omega_f} u_{mi} u_{mj} \frac{\partial \phi_j}{\partial x_i} \, d\Omega_f + (w_{2m}^2, \psi)_{\Omega_p} - \big( \frac{1}{2} w_{2m}^2, \psi \big)_{\Omega_p},
\label{17c}
\end{align}
for $(\phi,\psi) \in \Sigma.$ Now, for the first term on the right hand side of \eqref{17c}, consider the difference $\{u_{mi} u_{mj} \frac{\partial \phi_j}{\partial x_i} - u_i u_j \frac{\partial \phi_j}{\partial x_i} \}$. Application of Hölder's inequality gives
\begin{equation}
\lVert u_{mi} u_{mj} \frac{\partial \phi_j}{\partial x_i} - u_i u_j \frac{\partial \phi_j}{\partial x_i} \rVert_{\mathbf{L}^1(\Omega_f)} \leq \lVert \frac{\partial \phi_j}{\partial x_i} \rVert_{\mathbf{L}^2(\Omega_f)} \lVert u_{mi} u_{mj} - u_i u_j \rVert_{\mathbf{L}^2(\Omega_f)}. \label{M}
\end{equation}
 If we apply the triangle inequality to the term $\lVert u_{mi} u_{mj} - u_i u_j \rVert_{\mathbf{L}^2(\Omega_f)}$ we get
\begin{align}
\lVert u_{mi} u_{mj} - u_i u_j \rVert_{\mathbf{L}^2(\Omega_f)} \leq \lVert u_{mi} (u_{mj} - u_j) \rVert_{\mathbf{L}^2(\Omega_f)} + \lVert (u_{mi} - u_i) u_j \rVert_{\mathbf{L}^2(\Omega_f)} \label{E}.
\end{align}
For the first term of the RHS of \eqref{E}, application of Hölder's inequality yields
\begin{align*}
\lVert u_{mi} (u_{mj} - u_j) \rVert_{\mathbf{L}^2(\Omega_f)} \leq \lVert u_{mi} \rVert_{\mathbf{L}^4(\Omega_f)} \lVert u_{mj} - u_j \rVert_{\mathbf{L}^4(\Omega_f)}.
\end{align*}
We note that $\mathbf{H^1}(\Omega_f)$ compactly embedded into $\mathbf{L}^4(\Omega_f)$ by the Rellich-Kondrachov Theorem, and hence $ u_{mj} \to u_j$ strongly in $\mathbf{L}^4(\Omega_f)$. This gives that \begin{align} \lVert u_{mi} \rVert_{\mathbf{L}^4(\Omega_f)} \lVert u_{mj} - u_j \rVert_{\mathbf{L}^4(\Omega_f)} \to 0~~ as~~ m \to \infty.
\label{E-1}
\end{align}
The second term follows similarly and we obtain that
 \begin{equation}\lVert (u_{mi} - u_i) u_j \rVert_{\mathbf{L}^2(\Omega_f)} \to 
 0~~ as~~m \to \infty. \label{E-2}\end{equation} Thus, the combination of \eqref{E-1} and \eqref{E-2} gives
 \begin{align}
\lVert u_{mi} u_{mj} - u_i u_j \rVert_{\mathbf{L}^2(\Omega_f)} \to 0 \quad \text{as} \quad m \to \infty. \label{E-3}
\end{align}
Finally, using \eqref{E-3} in \eqref{M} yields
\begin{align}
\lVert u_{mi} u_{mj} \frac{\partial \phi_j}{\partial x_i} - u_i u_j \frac{\partial \phi_j}{\partial x_i} \rVert_{\mathbf{L}^1(\Omega_f)} \to 0 \quad \text{as} \quad m \to \infty.
\label{23c}
\end{align}
Moreover, for the quadratic term on the right-hand side of \eqref{17c}, we have for any $\psi \in \textbf{L}^2(\Omega_p)$,
$$
(w_{2m}^2 - w_2^2, \psi)_{\Omega_p}
= \int_{\Omega_p} (w_{2m} - w_2)(w_{2m} + w_2)\psi \, d\Omega_p,
$$
and by Hölder’s inequality 
\begin{align*}
\lvert (w_{2m}^2 - w_2^2, \psi)_{\Omega_p} \rvert
\leq \lVert w_{2m} - w_2\rVert_{\mathbf{L}^4(\Omega_p)} \, \lVert w_{2m} + w_2\rVert_{\mathbf{L}^4(\Omega_p)} \, \lVert \psi\rVert_{\mathbf{L}^2(\Omega_p)}.
\end{align*}
Since the embedding $\mathbf{H}_0^2(\Omega_p) \hookrightarrow \mathbf{L}^4(\Omega_p)$ is compact by the Rellich-Kondrachov Theorem, we have then
\begin{align}
(w_{2m}^2, \psi)_{\Omega_p} \to (w_2^2, \psi)_{\Omega_p}, \quad \text{for all}\; \psi \in \mathbf{L}^2(\Omega_p).
\label{24c}
\end{align} Thus, applying the convergences in \eqref{23c}  and \eqref{24c} to the right hand side of \eqref{17c} implies
\begin{align}
\lim_{m \to \infty} A_1((u_m,w_{2m}); (u_m,w_{2m}),(\phi,\psi))  = A_1((u,w_2); (u,w_2),(\phi,\psi))
\label{}
\end{align}
With this convergence  together with \eqref{a}, the assumption \textbf{(A.2)} holds for problem \eqref{var}.\\

\noindent \textbf{\underline{Step III:}} In order to show that the inf-sup condition in \textbf{(A.3)} holds, we consider the following boundary value problem: 
\begin{align}
\label{bb}
&\text{div} \, \phi = -q\quad \text{in} ~~ \Omega_f,
\nonumber\\
&\phi= \psi \quad \text{on} \quad \Omega_p, 
\nonumber\\
&\phi= 0 \quad \text{on }\quad S,
\end{align}
for given $q\in  \widehat{\mathbf{L}^2}(\Omega_f) $ and $\psi \in \widehat{\mathbf{H}^2_0}(\Omega_p).$ It is known that there exists a unique $\phi$ solves problem \eqref{bb} for $q \in \mathbf{L}^2(\Omega_f)$ and $\psi \in \mathbf{H}_0^{\frac{1}{2} + \epsilon}(\Omega_p)$
(see \cite{temam2024navier} and Theorem 3.33 of \cite{mclean2000strongly}). Besides, the compatibility condition $$-\int_{\Omega_f} q  dx=\int_{\Omega_p} \psi  dx= 0,$$ holds and so
\begin{align}
\label{4.8H}
\lVert \phi\rVert_{\mathbf{H^1}(\Omega_f)} \leq C (\lVert q\rVert_{\mathbf{L}^2(\Omega_f)} + \lVert \psi\rVert_{\mathbf{H}_0^2(\Omega_p)}).
\end{align}
Now, using \eqref{bb}, we get
\begin{align}
\frac{-( q, \text{div} \, \phi )_{\Omega_f}}{\lVert \phi\rVert_U + \lVert \psi\rVert_{\widehat{\mathbf{H}^2_0}(\Omega_p)}} = \frac{\lVert q\rVert_{\mathbf{L^2}(\Omega_f)}^2}{\lVert \phi\rVert_U + \lVert \psi\rVert_{\widehat{\mathbf{H}^2_0}(\Omega_p)}}.
\end{align}
If we choose now $\psi = \lVert q\rVert_{\textbf{L}^2(\Omega_f)} \tilde{\psi}$ in \eqref{bb}, where $\tilde{\psi} \in \widehat{\mathbf{H}^2_0}(\Omega_p)$ and $\lVert \tilde{\psi}\rVert_{\widehat{\mathbf{H}^2_0}(\Omega_p)} = 1$, we have
\begin{align}
\sup_{(\mu, \xi) \in \Sigma} \frac{-(q, div(\mu))}{\lVert \mu\rVert_{U} + \lVert \xi\rVert_{\widehat{\textbf{H}_0^2}(\Omega_p)}}
&\geq \frac{-(q, div(\phi))}{\lVert \phi\rVert_{U} + \lVert \psi\rVert_{\widehat{\textbf{H}_0^2}(\Omega_p)}}\nonumber\\
&=\frac{\lVert q\rVert_{\textbf{L}^2(\Omega_f)}^2}{\lVert \phi\rVert_{U} + \lVert q\rVert_{\textbf{L}^2(\Omega_f)}}
\nonumber\\
& \geq \beta \lVert q\rVert_{\textbf{L}^2(\Omega_f)},
\label{4.9}
\end{align}
where $\beta=\frac{1}{2C+1}$. Thus, the inf-sup assumption in \textbf{(A.3)} holds with $\beta= \frac{1}{2C+1}.$ Combining Step (I)-(III) and referring to Theorem \ref{ex} in the Appendix, we obtain that there exists a solution pair $(u, w_2) \in \Sigma=U \times \widehat{\mathbf{H}^2_0}(\Omega_p)$ and a function $p_0 \in \widehat{\textbf{L}^2}(\Omega_f)$ that solve the variational problem \eqref{var}. \\

\noindent\textbf{The completion of the proof of Theorem \ref{Th0.1}}\\

\noindent Having established the solution variable $w_2\in \widehat{\mathbf{H}^2_0}(\Omega_p),$ we recover the structure component $w_1$ via the relation \eqref{2.4}$_3$ for given data $w_1^*\in \mathbf{H}^1(\Omega_p).$ 
In addition, we infer from the constraint equation in \eqref{var}, the fact that
$$
u_3|_{\Omega_p} = w_2 \in \widehat{\mathbf{H}^2_0}(\Omega_p),
$$
and Green's identity that
\begin{align}
div (u) = 0 \quad \text{in } \Omega_f.
\label{25.50}
\end{align}
Moreover, if we take the test function $(\phi, \psi) \in \Sigma$ in \eqref{var} to be $\phi \in D(\Omega_f)$ and $\psi \equiv 0$, we then have:
$$
\lambda (u, \phi)_{\Omega_f} + \nu (\nabla u, \nabla \phi)_{\Omega_f} + (u \cdot \nabla u, \phi)_{\Omega_f} - (p_0, \operatorname{div}(\phi))_{\Omega_f}
= (u^*, \phi),
\quad \forall \phi \in D(\Omega_f).
$$
Then, we obtain from this relation and \eqref{25.50} that
solution component $u \in \textbf{H}^1(\Omega_f)$ satisfies
\begin{align}
&\lambda u - \nu \Delta u + u \cdot \nabla u + \nabla p_0 = u^* \quad \text{in } \Omega_f,
\nonumber\\
&div(u) = 0 \quad \text{in } \Omega_f
\nonumber\\
&u|_{\partial \Omega_f} =
\begin{cases}
\vec{0} & \text{on } \mathcal{S}, \\
[0, 0, w_2] & \text{on } \Omega_p
\end{cases}
\label{31}
\end{align}
Also, since the pressure function $p = p_0 + c_0$ where $c_0\in\mathbb{R},$ in order to recover the constant component $c_0$, it will be necessary to justify that $p_0 \big\rvert_{\Omega_p}$ has a well-defined meaning. To this end,  we begin with considering the fluid equation $\eqref{2.4}_1$ and apply Green's Theorem for $ \phi \in \mathbf{H^{1/2}}(\partial \Omega_f)$:
\begin{align}
-( \frac{\partial u}{\partial \nu}, \phi )_{\partial \Omega_f} + ( p_0, \phi)_{\partial \Omega_f}
&= ( -\Delta u, \phi)_{\Omega_f} + ( \nabla p_0, \phi )_{\Omega_f} - ( \nabla u, \nabla \phi )_{\Omega_f} + ( p_0, \operatorname{div} \, \phi )_{\Omega_f}
\nonumber\\
&= ( -(u \cdot \nabla) u, \phi )_{\Omega_f} - (\lambda u - u^*, \phi )_{\Omega_f} - ( \nabla u, \nabla \phi)_{\Omega_f} + ( p_0, \operatorname{div} \, \phi )_{\Omega_f}
\nonumber\\
&\leq C(\lVert u\rVert, \lVert p_0\rVert) \lVert \phi\rVert_{\mathbf{H^1}(\Omega_f)}.
\end{align}
Here we indirectly used the inequality 
\begin{align*}
\lvert (u \cdot \nabla w, z)\rvert \leq C \lVert u\rVert_{\mathbf{H^1}(\Omega_f)} \lVert w\rVert_{\mathbf{H^1}(\Omega_f)} \lVert z\rVert_{\mathbf{H^1}(\Omega_f)}
\end{align*}
(see \cite[Lemma 2.1, p. 284]{girault2012finite} for details). By the surjectivity of the Dirichlet Trace map, we thus have
\begin{align}
\label{33j}
[ \frac{\partial u}{\partial \nu} - p_0 n ]_{\partial \Omega_f} \in \textbf{H}^{-1/2}(\partial \Omega_f).
\end{align}
Since $\big[ \frac{\partial u}{\partial n}\big]_3=0$, we then have
\begin{align*}
p_0=p_0 n = \begin{bmatrix}
0 \\
0\\
p_0
\end{bmatrix} \in \textbf{H}^{-1/2}(\Omega_p).
\end{align*}
Thus, $ p_0 \big\rvert_{\Omega_p} \in \textbf{H}^{-1/2}(\Omega_p)$. 
Lastly, we determine the constant component of the associated pressure variable $p$, so that
$$
p = p_0 + c_0,
$$ 
where again $p_0 \in \widehat{\textbf{L}^2}(\Omega_f)$ is the solution of the mixed variational formulation \eqref{var}. 
What is more, the choice of $c_0$ ensures that 
aforesaid $w_2 \in \widehat{\textbf{H}_0^2}(\Omega_p)$ satisfies the plate 
equation $\eqref{2.4}_4$. 

(A) We first note that for any $\psi \in \widehat{\textbf{H}_0^2}(\Omega_p)$ 
$\exists \, \phi \in U$ such that $(\phi, \psi) \in \Sigma$. Indeed,
by Theorem 3.33, p. 95, of \cite{mclean2000strongly}, one 
can extend $\psi$ by zero to a $\textbf{H}^{\frac{1}{2}}(\partial \Omega_f)$-function. Thus, by the Sobolev Trace Theorem, 
$$
\exists \, \phi \in \textbf{H}^1(\Omega_f) \text{ s.t. } \left. \phi \right|_{\partial \Omega_f} =
\begin{cases}
\vec{0} \text{ on } \mathcal{S} \\
[0, 0, \psi] \text{ on } \Omega_p.
\end{cases}
$$
(B) For given $\psi \in \widehat{\mathbf{H}^2_0}(\Omega_p)$, let $\phi \in U$
be such that $(\phi, \psi) \in \Sigma$ (assured by (A)).
Multiplying the fluid PDE in \eqref{31} by this $\phi$,
and  integrating by parts, we have then
\begin{align}
\lambda (u, \phi) + \nu (\nabla u, \nabla \phi)_{\Omega_f} + ((u \cdot \nabla) u, \phi)_{\Omega_f} 
+ (p_0, \phi_3)_{\Omega_p} - (p_0, div(\phi))_{\Omega_f} = (u^*, \phi)_{\Omega_f}.
\label{33d}
\end{align}
Next, we consider the first equation in \eqref{var}
with said $(\phi, \psi) \in \Sigma$ (again, $\psi \in \widehat{\mathbf{H}_0^2}(\Omega_p)$ arbitrary); to wit, we have
\begin{align}
\lambda (u, \phi)_{\Omega_f}  &+ \nu (\nabla u, \nabla \phi)_{\Omega_f} + \lambda (w_2, \psi)_{\Omega_p} 
+ \lambda \rho (\nabla w_2, \nabla \psi)_{\Omega_p} + \frac{1}{\lambda}(\Delta w_2, \Delta \psi)_{\Omega_p} + ((u \cdot \nabla) u, \phi)_{\Omega_f}
\nonumber\\
&
- \frac{1}{2}(u_3^2, \psi)_{\Omega_p} - (p_0, \operatorname{div}(\phi))_{\Omega_f} = (\dot{u}^*, \phi)_{\Omega_f}
+ (w_2^*, \psi)_{\Omega_p} + \rho (\nabla w_2^*, \nabla \psi)_{\Omega_p} - \frac{1}{\lambda} (\Delta w_1^*, \Delta \psi)_{\Omega_p}
\label{34D}
\end{align}
Subtracting the relations \eqref{33d} and \eqref{34D},
we then have, $\forall \, \psi \in \widehat{\mathbf{H}_0^2}(\Omega_p)$, the variational
relation
\begin{align}
\lambda(w_2, \psi)_{\Omega_p} + \lambda \rho (\nabla w_2, \nabla \psi)_{\Omega_p} + \frac{1}{\lambda}(\Delta w_2, \Delta \psi)_{\Omega_p}
&- (p_0, \psi)_{\Omega_p} - \frac{1}{2}(u_3^2, \psi)_{\Omega_p}
\nonumber\\
&= (w_2^*, \psi)_{\Omega_p} + \rho (\nabla w_2^*, \nabla \psi)_{\Omega_p}
- \frac{1}{\lambda}(\Delta w_1^*, \Delta \psi)_{\Omega_p}. 
\label{35f}\end{align}
(Here, we are implicitly using the fact that $\phi_3\rvert_{\Omega_p} = \psi$). Let now $\xi \in \mathbf{H}_0^2(\Omega_p)$ be the solution of the
BVP
\begin{align*}
\begin{cases}
\Delta^2 \xi = 1 \quad \text{in } \Omega_p \\
\xi\rvert_{\partial \Omega_p} = \frac{\partial \xi}{\partial n_p} = 0 \quad \text{on } \partial \Omega_p
\end{cases}
\end{align*}
(Recall the orthogonal decomposition in \eqref{4.5g}). We also note that $$\int_{\Omega_p} \xi \, d\Omega_p = \int_{\Omega_p} \xi \, \Delta^2 \xi \, d\Omega_p = \| \Delta \xi \|^2_{\Omega_p} \neq 0.$$
Therewith, if
\begin{align}
c_0 &\equiv ( \int_{\Omega_p} \xi \, d\Omega_p)^{-1}
\cdot \Big[ \lambda(w_2, \xi)_{\Omega_p} + \lambda \rho (\nabla w_2, \nabla \xi)_{\Omega_p} - (p_0, \xi)_{\Omega_p}
\nonumber\\
&- \frac{1}{2}(u_3^2, \xi)_{\Omega_p} - (w_2^*, \xi)_{\Omega_p} - \rho (\nabla w_2^*, \nabla \xi)_{\Omega_p} \Big],
\label{36c}
\end{align}
then the meaningful boundary trace for $p_0\rvert_{\Omega_p}$, provided
in \eqref{33j}, allows for the well-definition of $c_0$.
With this choice of constant component $c_0$ and hence with the pressure term
\begin{align}
p = p_0 + c_0,
\end{align}
we have that $w_2 \in \widehat{\mathbf{H}_0^2}(\Omega_p)$ satisfies
\begin{align}
\lambda(w_2, \psi)_{\Omega_p} + \lambda \rho (\nabla w_2, \nabla \psi)_{\Omega_p} + \frac{1}{\lambda} (\Delta w_2, \Delta \psi)_{\Omega_p}
&= \frac{1}{2}(u_3^2, \psi)_{\Omega_p} + (p, \psi)_{\Omega_p} + (w_2^*, \psi)_{\Omega_p}
\nonumber\\
&+  \rho (\nabla w_2^*, \nabla \psi)_{\Omega_p} - \frac{1}{\lambda} (\Delta w_1^*, \Delta \psi)_{\Omega_p}
\; \forall \, \psi \in \hat{H}^2(\Omega_p),
\end{align}
after combining \eqref{35f}, \eqref{36c} and \eqref{4.5g}.\\

\noindent Thus $w_2 \in \widehat{\mathbf{H}}_0^2(\Omega_p)$ solves the plate equation 
in $\eqref{2.4}_4$. In sum, the variables $[u, w_1, w_2, p]$
obtained by the Babuška–Brezzi Theorem solve the coupled PDE system in $\eqref{2.4}_4$.\\

\noindent \underline{ \textbf{Proof of uniqueness}}: if we take the test functions $\phi=u$ and $\psi=w_2$ in \eqref{33d} and\eqref{35f}, we have 
\begin{align}\lambda (u, u)_{\Omega_f} + \nu(\nabla u, \nabla u)_{\Omega_f} + ((u \cdot \nabla) u, u)_{\Omega_f}+(p_0,u_3)_{\Omega_p} = (u^*, u)_{\Omega_f}
\label{5.3}
\end{align}
and
\begin{align} &\lambda (w_2, w_2)_{\Omega_p} + \lambda \rho (\nabla w_2, \nabla w_2)_{\Omega_p} + \frac{1}{\lambda} (\Delta w_2, \Delta w_2)_{\Omega_p}
\nonumber\\
&=(p_0,w_2)_{\Omega_p}+  \frac{1}{2} (u_3^2, w_2)_{\Omega_p} 
+ (w_2^*, w_2)_{\Omega_p}
+ \rho(\nabla w_2^*, \nabla w_2)_{\Omega_p} - \frac{1}{\lambda} (\Delta w_1^*, \Delta w_2)_{\Omega_p}.
\label{5.4}
\end{align}
By summing \eqref{5.3} and \eqref{5.4}, invoking integration by parts, and recalling that $u_3\rvert_{\Omega_p}=w_2$, one obtains cancellation of the pressure and nonlinear terms over $\Omega_f$ and $\Omega_p$. Applying H\"older’s and Young’s inequalities thereafter yields the following estimate:
\begin{align*}
\lambda \lVert u \rVert_{\mathbf{L}^2(\Omega_f)}^2&+ \nu \lVert \nabla u \rVert_{\mathbf{L}^2(\Omega_f)}^2  + \lambda  \lVert   w_2 \rVert_{\mathbf{L}^2(\Omega_p)}^2+ \lambda \rho \lVert  \nabla w_2 \rVert_{\mathbf{L}^2(\Omega_p)}^2 + \frac{1}{\lambda} \lVert  \Delta w_2 \rVert_{\mathbf{L}^2(\Omega_p)}^2
\nonumber\\
&= (u^*, u)_{\Omega_f} + (w_2^*, w_2)_{\Omega_p}+ \rho (\nabla w_2^*, \nabla w_2)_{\Omega_p} - \frac{1}{\lambda} (\Delta w_1^*, \Delta w_2)_{\Omega_p}
\nonumber\\
\leq &\frac12(\lambda\lVert  u \rVert_{\mathbf{L}^2(\Omega_f)}^2 + \lambda\lVert w_2 \rVert_{\mathbf{L}^2(\Omega_f)}^2 + \lambda \rho \lVert \nabla w_2 \rVert_{\mathbf{L}^2(\Omega_p)}^2 + \frac{1}{\lambda} \lVert \Delta w_2 \rVert_{\mathbf{L}^2(\Omega_p)}^2) 
\nonumber\\
&+ \frac{C}{\lambda}(  \lVert u^*\rVert_{\mathbf{L}^2(\Omega_f)}^2 +  \lVert w_2^*\rVert_{\mathbf{L}^2(\Omega_p)}^2 + \rho \lVert \nabla w_2^*\rVert_{\mathbf{L}^2(\Omega_p)}^2  + \lVert \Delta w_1^*\rVert_{\mathbf{L}^2(\Omega_p)}^2).
\end{align*}
Now, absorbing the corresponding terms from the right-hand side into the left-hand side gives
\begin{align}
&\lambda \lVert  u \rVert_{\mathbf{L}^2(\Omega_f)}^2+ \nu \lVert \nabla u \rVert_{\mathbf{L}^2(\Omega_f)}^2  + \lambda  \lVert  w_2 \rVert_{\mathbf{L}^2(\Omega_p)}^2+ \lambda \rho \lVert  \nabla w_2 \rVert_{\mathbf{L}^2(\Omega_p)}^2 + \frac{1}{\lambda} \lVert \Delta w_2 \rVert_{\mathbf{L}^2(\Omega_f)}^2 \nonumber\\
\leq &\frac{C}{\lambda}( \lVert u^*\rVert_{\mathbf{L}^2(\Omega_f)}^2 +  \lVert w_2^*\rVert_{\mathbf{L}^2(\Omega_p)}^2 + \rho \lVert \nabla w_2^*\rVert_{\mathbf{L}^2(\Omega_p)}^2  + \|\Delta w_1^*\lVert_{\mathbf{L}^2(\Omega_f)}^2). \label{bound}
\end{align}
In what follows we will denote the constant 
$$K:=C( \lVert u^*\rVert_{\mathbf{L}^2(\Omega_f)}^2 +  \lVert w_2^*\rVert_{\mathbf{L}^2(\Omega_p)}^2 + \rho \lVert \nabla w_2^*\rVert_{\mathbf{L}^2(\Omega_p)}^2  + \|\Delta w_1^*\lVert_{\mathbf{L}^2(\Omega_f)}^2).$$
Assume that there are two solutions $(u^1,p_0^1,w_2^1)$ and $(u^2,p_0^2,w_2^2)$ to problem \eqref{2.4} and denote $$u=u^1-u^2;~~~~~ w_2=w_{2}^{1}-w_{2}^{2};~~~~~p_0=p_0^1-p_0^2.$$
Now, re-writing the equation \eqref{33d} with respect to $(u^1, p_0^1)$ and $(u^2, p_0^2)$; and also re-writing \eqref{35f} with respect to $(w_2^1, p_0^1)$ and $(w_2^2, p_0^2)$ and then subtracting the corresponding equations and testing with functions $\phi = u$ and $\psi = w_2$ in the resulting differences, we obtain 
\begin{align}\lambda (u, u)_{\Omega_f} + \nu(\nabla u, \nabla u)_{\Omega_f} + ((u \cdot \nabla) u^1, u)_{\Omega_f}+ ((u^2 \cdot \nabla) u, u)_{\Omega_f} + (p_0,u_3)_{\Omega_p} = 0
\label{28c}
\end{align}
and
\begin{align} \lambda (w_2, w_2)_{\Omega_p} + \lambda \rho (\nabla w_2, \nabla w_2)_{\Omega_p} + \frac{1}{\lambda} (\Delta w_2, \Delta w_2)_{\Omega_p}
=(p_0,w_2)_{\Omega_p}+  \frac{1}{2} ((u_3^1+u_3^2)u_3, w_2)_{\Omega_p}.
\label{29c}
\end{align}
By summing \eqref{28c} and \eqref{29c}, invoking integration by parts, and recalling that $u_3\rvert_{\Omega_p}=w_2$ one obtains cancellation of the pressure and nonlinear terms between $((u^2\cdot \nabla)u,u)_{\Omega_f}$ and $\frac{1}{2}(u_3^2 u_3, w_2)_{\Omega_p} $. Applying Hölder’s, Poincar\'e's, Gagliardo–Nirenberg Interpolation, and Young’s inequalities, we subsequently take the following estimate:
\begin{align*}
&\lambda \lVert u \rVert_{\mathbf{L}^2(\Omega_f)}^{2} + \nu \lVert \nabla u \rVert_{\mathbf{L}^2(\Omega_f)}^{2} + \lambda \lVert w_2 \rVert_{\mathbf{L}^2(\Omega_p)}^{2} + \lambda \rho \lVert \nabla w_2 \rVert_{\mathbf{L}^2(\Omega_p)}^{2}  + \frac{1}{\lambda} \lVert \Delta w_2 \rVert_{\mathbf{L}^2(\Omega_p)}^{2} 
\\
= &- ((u \cdot \nabla) u^1, u)_{\Omega_f} +\frac12 (u_3^1 u_3, w_2)_{\Omega_p}  = - ((u \cdot \nabla) u^1, u)_{\Omega_f} +\frac12 (w_2^1 w_2, w_2)_{\Omega_p}
\\
\leq &C\|\nabla u^1\|_{\mathbf{L}^2(\Omega_f)} \|u\|_{\mathbf{L}^2(\Omega_f)}^{\frac12} \|\nabla u\|_{\mathbf{L}^2(\Omega_f)}^{\frac32} + C\|w_2^1\|_{\mathbf{L}^2(\Omega_p)} \|w_2\|_{\mathbf{L}^2(\Omega_p)} \|\nabla w_2\|_{\mathbf{L}^2(\Omega_p)}
\\
\leq & \nu\|\nabla u\|_{\mathbf{L}^2(\Omega_f)}^2 + \lambda \rho \|\nabla w_2\|_{\mathbf{L}^2(\Omega_p)}^2 + C\nu^{-3}\|\nabla u^1\|_{\mathbf{L}^2(\Omega_f)}^4 \|u\|_{\mathbf{L}^2(\Omega_f)}^2 + \frac{C}{\lambda\rho} \|w_2^1\|_{\mathbf{L}^2(\Omega_p)}^2 \|w_2\|_{\mathbf{L}^2(\Omega_p)}^2,
\end{align*}
where the constant $C>0$ is independent of the parameters $\nu, \lambda, \rho$. Simple calculations give then
\begin{align*}
    (\lambda - C\nu^{-3}\|\nabla u^1\|_{\mathbf{L}^2(\Omega_f)}^4) \|u\|_{\mathbf{L}^2(\Omega_f)}^2 + (\lambda - \frac{C}{\lambda\rho} \|w_2^1\|_{\mathbf{L}^2(\Omega_p)}^2) \|w_2\|_{\mathbf{L}^2(\Omega_p)}^2 \leq 0.
\end{align*}
If we choose
\begin{align*}
    \lambda \geq C\nu^{-3}\|\nabla u^1\|_{\mathbf{L}^2(\Omega_f)}^4, \quad \lambda^2 \geq \frac{C}{\rho} \|w_2^1\|_{\mathbf{L}^2(\Omega_p)}^2,
\end{align*}
and use the bound obtained in \eqref{bound} we take
\begin{align*}
    \lambda^3 \geq C\nu^{-5}K^2, \quad \lambda^4 \geq \frac{CK}{\rho}.
\end{align*}
Therefore, if one has 
\begin{align*}
    K\leq \widetilde C \min\{\lambda^{\frac32}\nu^{\frac52}, \rho \lambda^4\},
\end{align*}
for some constant $\widetilde C$ which only depends on the domain then we obtain that $u^1=u^2$ and $w_2^1=w_2^2$. In addition, the  uniqueness of the  pressure follows directly from \eqref{4.9} which completes the proof of Theorem \ref{Th0.1}.
\end{proof}

\section{Numerical Implementation \& Results}
\subsection{Algorithmic Setup}
Extensive numerical analysis of the time dependent linear Stokes-Plate problem was done in \cite{geredeli2024partitioning}. Additional numerical investigations involving the time dependent linear system are in \cite{AMM25} and \cite{M24}. As the present work serves as an extension of those investigations, the methods employed herein are highly similar. To facilitate self-containedness, we reiterate the important details. 

For the purposes of this work we consider the geometrical pair $(\Omega_f,\Omega_p)$ as $\Omega_f = \{(x_1,x_2):x_1,x_2\in(0,1)\}$ and $\Omega_p=\{(x_1,x_2):x_1\in(0,1),~x_2=1\}$. This geometry is convex with wedge angles at most $2\pi/3$. To handle geometries with curved boundaries, one may invoke isoparametric elements \cite{AB84}. 

On $\Omega_f$ define the triangular mesh $\mathcal{T}:=\{\mathcal{T}_\ell\}_{\ell=1}^{L}$, where each $\mathcal{T}_\ell$ is a six node $\mathbb{P}2/\mathbb{P}1$ Taylor-Hood element. Denote the basis functions associated to these elements as $\{\phi_1,\phi_2,...,\phi_M\}$ for fluid and $\{\psi_1,\psi_2,...,\psi_{M_p}\}$ for pressure.

On the plate portion of the domain, $\Omega_p$, we employ four node quintic Hermite basis functions as such elements are $H^2$-conforming. Specifically, we use the following as our standard Hermite element:
\begin{figure}[ht!]
\begin{center}
\hspace{-0.5in}
\begin{tikzpicture}
\draw[black, thick] (0,0) -- (8,0);

\filldraw[black] (0,0) circle (2pt) ;
\draw[black, thick] (0,0) circle (5pt);
\draw  node[below] at (0,-0.15) {$x_{i_1^\ell}$};
\draw node[above] at (0,0.15) {$\theta_5$};
\draw node[above] at (0,0.65) {$\theta_1$};

\filldraw[black] (8,0) circle (2pt) ;
\draw[black, thick] (8,0) circle (5pt);
\draw  node[below] at (8,-0.15) {$x_{i_2^\ell}$};
\draw node[above] at (8,0.15) {$\theta_6$};
\draw node[above] at (8,0.65) {$\theta_2$};

\filldraw[black] (8/3,0) circle (2pt) ;
\draw  node[below] at (8/3,-0.15) {$x_{i_3^\ell}$};
\draw node[above] at (8/3,0.65) {$\theta_3$};

\filldraw[black] (16/3,0) circle (2pt) ;
\draw  node[below] at (16/3,-0.15) {$x_{i_4^\ell}$};
\draw node[above] at (16/3,0.65) {$\theta_4$};

\draw node[above] at (-1,0.2) {$\partial_x w:$};
\draw node[above] at (-1,0.7) {$w:$};
\end{tikzpicture}
\end{center}
\end{figure}

Equidistant points yield good conditioning of the resultant plate mass and stiffness matrices \cite{S06} while maintaining ease of implementation. Denote the number of nodes on the plate mesh (and hence the number of Lagrange degrees of freedom) as $\tilde{M}$. From this, the degrees of freedom for the plate (Lagrange plus Hermite degrees) is given by $DOF_s=\tilde{M}+\frac{\tilde{M}+2}{3}$. With this in hand, denote the basis on these plate elements as $\{\theta_1,\theta_2,...,\theta_{DOF_s}\}$.

To handle 3d-2d geometries, one may still appeal to $\mathbb{P}2/\mathbb{P}1$ Taylor-Hood elements, replacing the six node triangles with ten node tetrahedrons. For the plate, however, one must appeal to Argyris elements if one wishes to preserve the $H^2$ conforming property of the plate basis (see p. 255 of \cite{S06}). Alternatively, one can appeal to the non-conforming Morley triangular elements \cite{M68}. While these elements are non-conforming, they have the key advantage over Argyris via their ease of implementation by having six degrees of freedom versus twenty-one. These Morley elements were successfully utilized for an FSI in \cite{geredeli2024partitioning}.

The novelty (and complication) of the present numerical scheme beyond \cite{geredeli2024partitioning} is of course the presence of the Navier-Stokes nonlinearity, $u\cdot\nabla u$, in the fluid equation and the resultant flux matching term of the right hand side of the plate equation, $\frac{1}{2}u_3^2$. To circumvent solving a nonlinear system, we employ a Picard iteration scheme as discussed in Remark 6.41 of \cite{J16}. That is, at iteration $n$, when computing iterate $n+1$, rather than computing the full nonlinear term, one approximates thusly
\begin{align*}
(u^{n+1}\cdot\nabla u^{n+1},\phi_i)_{\Omega_f}&\approx  (u^{n}\cdot\nabla u^{n+1},\phi_i)_{\Omega_f},\\
\frac{1}{2}((u_3^{n+1})^2,\theta_i)_{\Omega_p} &\approx \frac{1}{2}(u_3^nu_3^{n+1},\theta_i)_{\Omega_p}=\frac{1}{2}(w_2^nw_2^{n+1},\theta_i)_{\Omega_p}.
\end{align*}
Such an approximation is tantamount to an Oseen approximation to Navier-Stokes, and has the effect of introducing the matrices $B^n$, $T^{n}_x$, $T^{n}_y$, and $T^{n}_z$ (if doing 3d fluids). Additionally, while the right hand side of the plate equation has a term $\frac{1}{2}u_3^2$ (or $\frac{1}{2}u_2^2$ in this example), we leverage the boundary velocity coupling on $\Omega_f$, $u_3=w_2$, to simplify the calculation before applying the appropriate Picard iterative scheme. Thus, at each iteration $n$, these matrices are given by
\begin{align}
B^n&:=\frac{1}{2}\begin{bmatrix}(w_2^{n}\theta_j,\theta_i)_{\Omega_p}
\end{bmatrix}_{i,j=1}^{DOF_s},\\
T^n_x&:=\begin{bmatrix}(u_1^n\partial_x\phi_j,\phi_i)_{\Omega_f}
\end{bmatrix}_{i,j=1}^M,\\
T^n_y&:=\begin{bmatrix}(u_2^n\partial_y\phi_j,\phi_i)_{\Omega_f}
\end{bmatrix}_{i,j=1}^M,\\
T^n_z&:=\begin{bmatrix}(u_3^n\partial_z\phi_j,\phi_i)_{\Omega_f}
\end{bmatrix}_{i,j=1}^M.
\end{align}
Additionally, we require ``mean-enforcing'' vectors $E_p$ and $E_s$ which ensure the pressure and plate velocity are mean free respectively:
\begin{align}
    E_p &:= [(\psi_i,1)_{\Omega_f}]_{i=1}^{M_p},\\
    E_s &:= [(\theta_i,1)_{\Omega_p}]_{i=1}^{DOF_s}.
\end{align}
Finally, we also require the standard mass, divergence and stiffness finite element matrices coming from the constrained variational formulation:
\begin{align}
M_f&:=\begin{bmatrix}(\phi_j,\phi_i)_{\Omega_f}
\end{bmatrix}_{i,j=1}^M\label{matrix1},\\
M_s&:=\begin{bmatrix}(\theta_j,\theta_i)_{\Omega_p}
\end{bmatrix}_{i,j=1}^{DOF_s}\label{matrix2},\\
K_f&:=\begin{bmatrix}(\nabla\phi_j,\nabla\phi_i)_{\Omega_f}
\end{bmatrix}_{i,j=1}^M\label{matrix3},\\
K_s&:=\begin{bmatrix}(\theta'_j,\theta'_i)_{\Omega_p}
\end{bmatrix}_{i,j=1}^{DOF_s}\label{matrix4},\\
S&:=\begin{bmatrix}(\theta''_j,\theta''_i)_{\Omega_p}
\end{bmatrix}_{i,j=1}^{DOF_s}\label{matrix5},\\
B_x&=\begin{bmatrix}-(\partial_x\phi_j,\psi_i)_{\Omega_f}
\end{bmatrix}_{i,j=1}^{M_p,M}\label{matrix6},\\
B_y&=\begin{bmatrix}-(\partial_y\phi_j,\psi_i)_{\Omega_f}
\end{bmatrix}_{i,j=1}^{M_p,M}\label{matrix7}.
\end{align}

Interpolating with respect to the above bases, the spatially discretized system obtained from \eqref{2.4} is given by
\begin{align}
    \begin{cases}
        \lambda M_f u^{n+1}_1 + \nu K_fu^{n+1}_1 + T^n_xu_1^{n+1}+T^n_yu_1^{n+1}+B_xp^{n+1} = F_1,\\
        \lambda M_f u^{n+1}_2 + \nu K_fu^{n+1}_2 + T^n_xu_2^{n+1}+T^n_yu_2^{n+1}+B_yp^{n+1} = F_2,\\
        B_x^Tu_1^{n+1}+B_y^Tu_2^{n+1} + E_p\mu^{n+1} = 0,\\
        E_p^Tp^{n+1} = 0,\\
        \lambda M_sw_2^{n+1} + \rho\lambda K_s w_2^{n+1} + Sw_2^{n+1} = B^nw_2^{n+1} + (p^{n+1},\theta)_{\Omega_p} + E_ss^{n+1} + F_3,\\
        \lambda M_sw_1^{n+1} - M_sw_2^{n} = F_4,\\
        E_s^Tw_2^{n+1} = 0.
\end{cases}\label{discFSI}
\end{align}
A few things bear note in \eqref{discFSI}. First, in Picard iteration, the right hand side does not change from iterate to iterate. Additionally, much like enforcing $w_2\in \widehat{H_0^2}(\Omega_p)$ introduces a Lagrange Multiplier $s$, one can enforce the mean free component of pressure by introducing a similar Lagrange Multiplier $\mu\in \mathbb{R}$ in the compressibility component (see Section 12.2.10 of \cite{LB13}). This is more straightforward numerically than forcing each pressure basis function to be mean free. 

Also note, as was done in \cite{geredeli2024partitioning}, we implement the boundary velocity coupling by requiring that $u_2^{n+1}=w_2^n$. To accomplish this, rather than build basis functions which line up perfectly on the boundary $\Omega_f$, we invoke a variational crime \cite{SF73} and merely require that the two velocity functions $u_2$ and $w_2$ align at the fluid nodes which are on $\Omega_p$. This is an appropriate approximation of the boundary coupling for fine meshes (p. 186 of \cite{AB84}).

After solving for the fluid solution pair $(u^{n+1},p^{n+1})$ using the prescribed boundary coupling, one uses $p^{n+1}$ on the right hand side of the plate equation as a known force. In doing so, one must compute an inner product of the form $(p^{n+1},\theta_j)_{\Omega_f}$. To accomplish this, we define a data structure which identifies those elements of the $\mathbb{P}_1$ pressure mesh which have non-zero measure boundary on $\Omega_p$. Then, to apply the appropriate quadrature rule for computing $(p^{n+1},\theta_j)_{\Omega_f}$, one appeals to the fact that the basis functions $\{\psi_i\}_{i=1}^{M_p}$ constitute a partition of unity for the whole geometry $\Omega_f$, and so also for $\Omega_p$.

All of this is encapsulated in the following algorithm (c.f. Algorithm 1 of \cite{geredeli2024partitioning}):
\vspace{0.1cm}
\hrule
\vspace{0.1cm}
\textbf{Algorithm 1:} FEM solver for \eqref{2.4}
\vspace{0.1cm}
\hrule
\begin{center}
\begin{algorithmic}
\State 1. Build FSI mesh
\State 2. Compute matrices 
\For{$n = 1,2,...,N-1$} 
\State 3. Solve fluid subsystem \begin{align*}
&\lambda M_f u^{n+1}_1 + \nu K_fu^{n+1}_1 + T^n_xu_1^{n+1}+T^n_yu_1^{n+1}+B_xp^{n+1} = F_1,\\
&\lambda M_f u^{n+1}_2 + \nu K_fu^{n+1}_2 + T^n_xu_2^{n+1}+T^n_yu_2^{n+1}+B_yp^{n+1} = F_2,\\
&B_x^Tu_1^{n+1}+B_y^Tu_2^{n+1} + E_p\mu^{n+1} = 0,\\
&E_p^Tp^{n+1} = 0,\\
&[u_1^{n+1},u_2^{n+1}]|_{\Omega_f} = [0,w_2^n],~[u_1^{n+1},u_2^{n+1}]|_{S} = 0.
\end{align*}
\State 4. Solve plate subsystem \begin{align*}
    &\lambda M_sw_2^{n+1} + \rho\lambda K_s w_2^{n+1} + Sw_2^{n+1} = B^nw_2^{n+1} + (p^{n+1},\theta_j)_{\Omega_p} + E_ss^{n+1} + F_3,\\
    &\lambda M_sw_1^{n+1} - M_sw_2^{n+1} = F_4,\\
    &E_s^Tw_2^{n+1} = 0.
\end{align*}
\EndFor
\end{algorithmic}
\end{center}
\hrule

\FloatBarrier  
\subsection{Test Problem and Visual Results}
For our test problem we use the following functions:
\begin{align}
u_1 &= 6(y^2-y)(x-1)^3x^3,\\
u_2 &= -3(2y^3-3y^2)x^2(x-1)^2(2x-1),\\
w_2 &= 3x^2(x-1)^2(2x-1),\\
w_1 &= -w_2,\\
p & = x-y + 1.
\end{align}
Note that $p$ consists of two components, $q = x-y$, which is mean free, and $s=1$, which is orthogonal to mean free $H_0^2(\Omega_p)$ functions. 

Implementing Algorithm 1 with the above test problem with $N=20$ iterations yields the following results:
\vspace{-0.75cm}
\begin{figure}[h]
\centering
\begin{multicols}{2}
\includegraphics[width=1.15\linewidth]{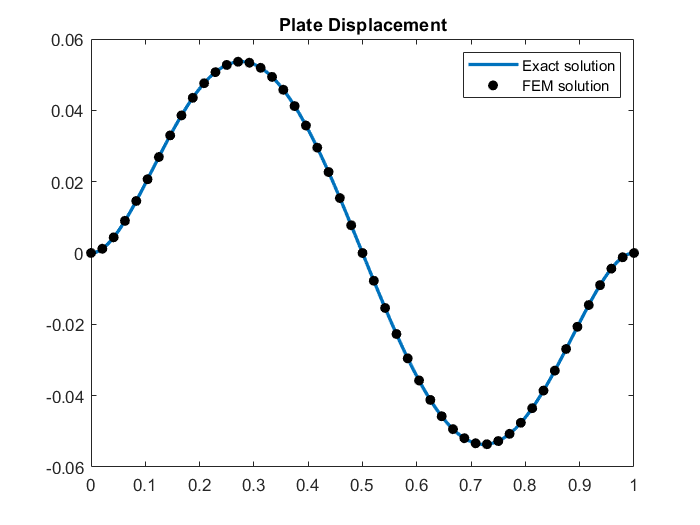}

\columnbreak

\includegraphics[width=1.15\linewidth]{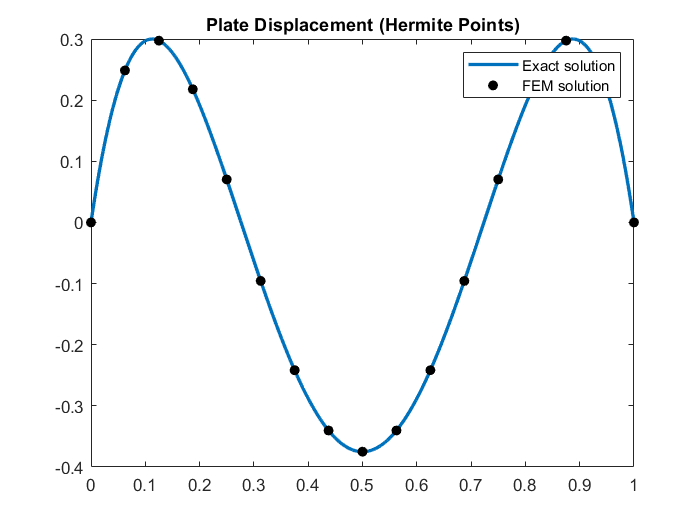}
\end{multicols}
\vspace{-0.75cm}
\caption{Plate displacement degrees of freedom ($M=2113$)}
\label{fig:plate-disp}
\end{figure}

\begin{figure}[h]
    \centering
    \begin{multicols}{2}
    \includegraphics[width=1.15\linewidth]{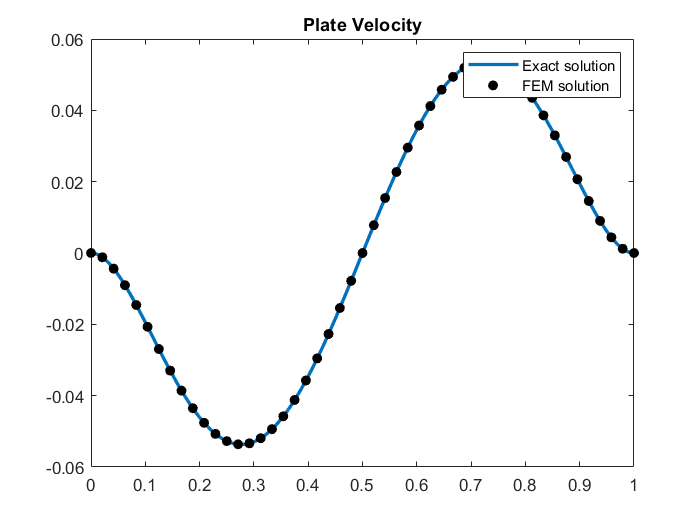}

    \columnbreak

    \includegraphics[width=1.15\linewidth]{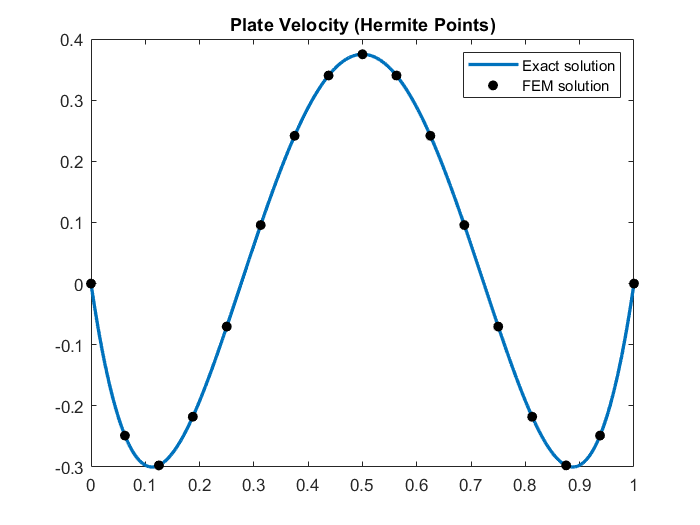}
    \end{multicols}
\vspace{-0.75cm}
    \caption{Plate velocity degrees of freedom ($M=2113$)}
    \label{fig:plate-vel}
\end{figure}
\vfill

\begin{figure}[htbp]
\centering
\begin{multicols}{2}
\includegraphics[width=1.05\linewidth]{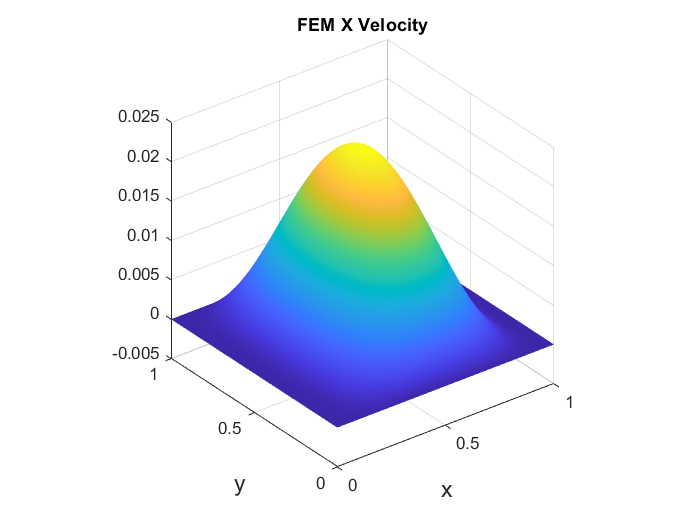}

\columnbreak

\includegraphics[width=1.05\linewidth]{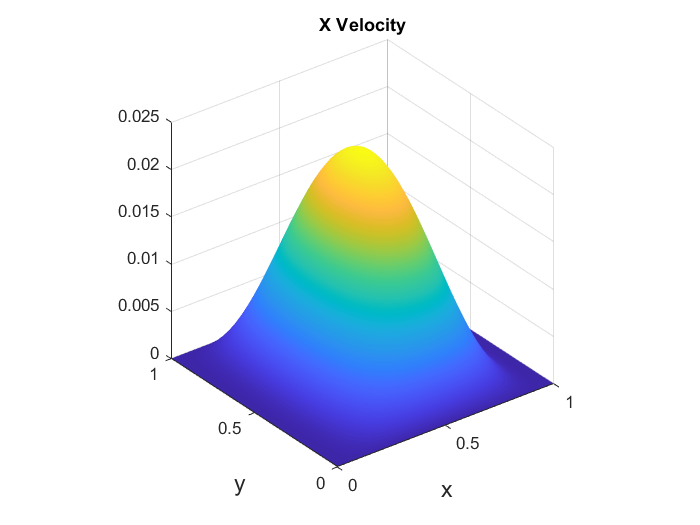}
\end{multicols}
\vspace{-0.9cm}
\caption{Finite element and actual $x$-fluid component ($M=8321$) }
\label{fig:fluid-one}
\end{figure}

\begin{figure}[htbp]
    \centering
    \begin{multicols}{2}
    \includegraphics[width=1.05\linewidth]{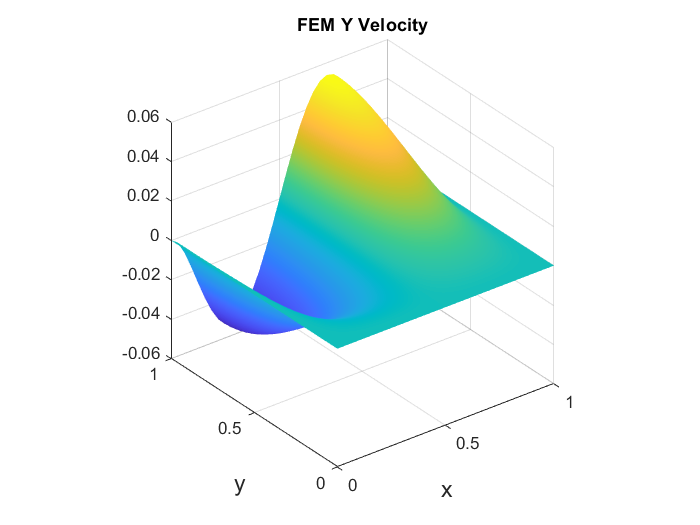}

    \columnbreak
    \includegraphics[width=1.05\linewidth]{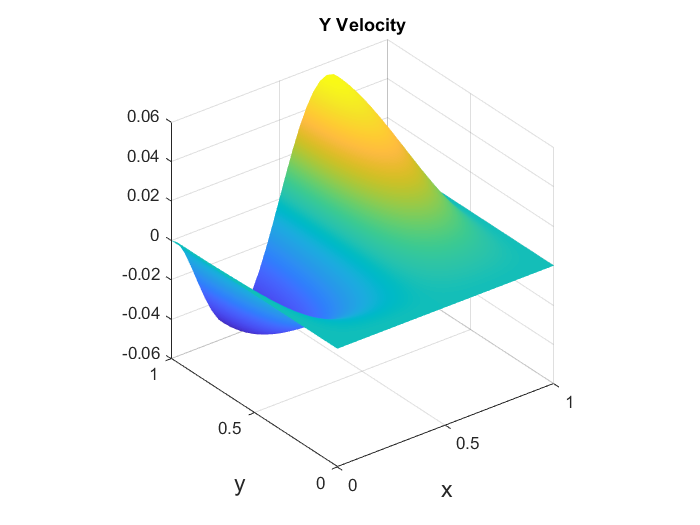}
    \end{multicols}
\vspace{-0.9cm}
    \caption{Finite element and actual $y$-fluid component ($M=8321$)}
    \label{fig:fluid-two}
\end{figure}

\begin{figure}[htbp]
    \centering
    \begin{multicols}{2}
    \includegraphics[width=1.05\linewidth]{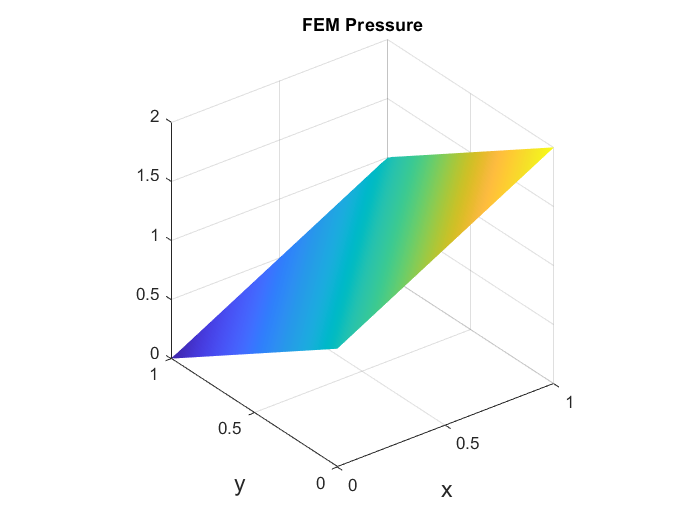}

    \columnbreak
    \includegraphics[width=1.05\linewidth]{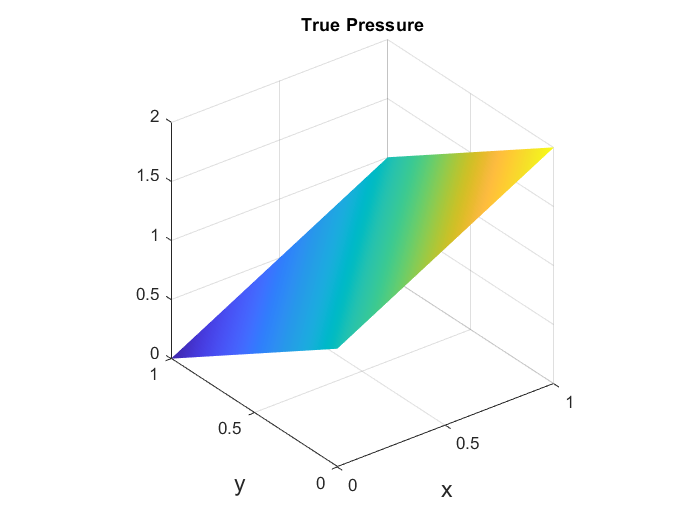}
    \end{multicols}
\vspace{-0.9cm}
    \caption{Finite element and actual pressure  ($M=8321$)}
    
    \label{fig:pressure}
\end{figure}

\FloatBarrier

\subsection{Error Analysis}
In addition to the visual results in the above section, we have the following error ratios in various norms based on characteristic length $h$:
\begin{table}[htbp]
    \begin{tabular}{c|c|c|c|c}
        $h$&$\|\nabla u_1-\nabla u_{1,I}\|_{\Omega_f}$ & $H^1$ Ratio&$\|\nabla u_2-\nabla u_{2,I}\|_{\Omega_f}$ & $H^1$ Ratio\\\hline
    0.25&	7.47E-03&	& 2.41E-02&	\\
	0.125&	1.50E-03&	2.32E+00&	6.37E-03&1.92E+00	\\
	0.0625&	3.20E-04&	2.22E+00&	1.62E-03&1.98E+00	\\
	0.03125& 7.55E-05&	2.08E+00&	4.07E-04&1.99E+00	\\
	0.015625& 1.88E-05&	2.01E+00&	1.02E-04&1.99E+00	
    \end{tabular}
\end{table}

\begin{table}[htbp]
    \begin{tabular}{c|c|c|c|c}
    $h$&$\|p-p_{I}\|_{\Omega_f}$&$L^2$ Ratio &$\|w_1''-w_{1,I}''\|_{\Omega_p}$&$H^2$ Ratio\\\hline
	0.25& 1.56E-02&	&4.09E-04&\\
	0.125& 2.59E-03&2.59E+00	&3.82E-04&9.80E-02\\
	0.0625& 3.68E-04&2.82E+00	&1.95E-04&9.69E-01\\
	0.03125& 5.08E-05&2.86E+00	&9.70E-05&1.01E+00\\
	0.015625& 1.04E-05&2.29E+00	&4.83E-05&1.01E+00\\
    \end{tabular}
\end{table}
These error ratios are as expected given the regularity of the solutions to the linear problem (see Theorem 5.6 and inequality (5.82) on page 225 of \cite{AB84}). The main difference of note from the ratios supplied in \cite{M24} is the lower order convergence on the $H^2$-norm of $w_1$ (1 above versus 2 in the cited work). This is due to the presence of rotational inertia $-\rho \Delta w_{tt}$, the introduction of which reduces the regularity of the plate displacement in the linear problem from $H^4(\Omega_p)$ to $H^3(\Omega_p)$ \cite{avalos2014mixed}.

\section{ Appendix}
\vspace{0.3cm}

\noindent \textbf{Abstract framework}\\
We consider two Hilbert spaces $V$ and $M$, equipped with the norms $\lVert \cdot \rVert_V$ and $\lVert \cdot \rVert_M$, respectively. Let $\Omega$ is a bounded domain of $\mathbb{R}^N$ with Lipschitz-continuous boundary $\partial \Omega$. We now define the function space 
\begin{align*}
V =\{ v \in H_0^1(\Omega)^N : \operatorname{div} \, v = 0\},
\end{align*}
which consists of divergence-free vector fields with $\partial \Omega = 0$. The bilinear form $ a_0(\cdot, \cdot) : V \times V \to \mathbb{R}$ is given by
\begin{align*}
a_0(u, v) = \nu \int_{\Omega} \nabla u \cdot \nabla v \, dx.
\end{align*}
The trilinear form $ a_1(\cdot; \cdot, \cdot) : V \times V \times V \to \mathbb{R}$ is defined as
\begin{align*}
a_1(w; u, v) = \int_{\Omega} (w \cdot \nabla) u \cdot v \, dx.
\end{align*}
We also define the bilinear form $b(\cdot, \cdot) : V \times M \to \mathbb{R} $ as
\begin{align*}
b(v, \mu) = -\int_{\Omega} \mu \, \operatorname{div} v \, dx.
\end{align*}
Finally, the combined form $a(\cdot; \cdot, \cdot) : V \times V \times V \to \mathbb{R}$ is given by
\begin{align*}
a(w; u, v) = a_0(u, v) + a_1(w; u, v).
\end{align*}
Based on the formulation provided above and in accordance with \cite[Chapter 4, Theorem 1.2 (p. 280) and Theorem 1.4 (p. 284)]{girault2012finite}, we present a restatement of the key components for the existence and uniqueness of weak solutions.

\begin{theorem}
\label{ex}
Consider the problem
\begin{align}
\begin{cases}
\label{3.6H}
a(u; u, v) + b(v, \lambda) = ( f, v) \quad \forall v \in \mathbf{H}_0^1(\Omega),\\
b(v, \mu)=0.
\end{cases}
\end{align}
Assume the following conditions hold:
\begin{enumerate}[label=A\arabic*.]
\item $ a(v; v, v)$ is uniformly $ V$-elliptic with respect to $w$, that is  
\begin{align*}
\exists\, \beta >0: \;a(v; v, v) \geq \beta \lVert v \rVert_V^2 \; \forall v\in V;
\end{align*}
\item \text{ The mapping } $u \to a(u, u, v) \text{ is sequentially weakly continuous on the separable space}\, V$, that is
\begin{align*}
\textrm{if}\; u_m \rightharpoonup u\; \textrm{weakly in}\;  V,\; \textrm{then}\;  
\lim_{m \to \infty} a(u_m; u_m, v) = a(u; u, v) \; \forall v\in V ;
\end{align*}
\item  The bilinear form $ b(v, \mu)$ satisfies the inf-sup condition, that is  \begin{align*}
\exists\, \beta >0: \inf_{\mu \in M} \sup_{v \in V} \frac{b(v, \mu)}{\lVert v \rVert_V \lVert \mu \rVert_M} \geq \beta > 0,
\end{align*}
\end{enumerate} 
then the problem \eqref{3.6H} has  at least one pair of solutions $(u,\lambda) \in V \times \widehat{\mathbf{L}^2}(\Omega)$.
\end{theorem}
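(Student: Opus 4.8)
The plan is to follow the classical Galerkin-and-degree-theory argument of Girault--Raviart in three movements: reduce the saddle-point system to a nonlinear equation posed on the divergence-free subspace $V$; solve that reduced equation through a finite-dimensional Galerkin scheme (with an a priori bound from A.1, solvability of the discrete problems from Brouwer's theorem, and passage to the limit powered by A.2); and finally recover the Lagrange multiplier $\lambda$ from the inf-sup condition A.3.

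First I would observe that $V$ is exactly the kernel of the divergence operator $B$, so any solution $(u,\lambda)$ of \eqref{3.6H} has $u\in V$, and conversely testing the first line of \eqref{3.6H} against $v\in V$ annihilates $b(v,\lambda)$; hence it suffices to find $u\in V$ with $a(u;u,v)=(f,v)$ for all $v\in V$ and afterwards construct $\lambda$. Using separability of $V$, fix a sequence $\{w_k\}_{k\ge1}$ with dense linear span, put $V_m=\operatorname{span}\{w_1,\dots,w_m\}$, and define the continuous map $\Phi_m\colon V_m\to V_m$ by $(\Phi_m(z),v)_V=a(z;z,v)-(f,v)$. Ellipticity A.1 gives $(\Phi_m(z),z)_V\ge\beta\|z\|_V^2-\|f\|_{V'}\|z\|_V$, which is positive on the sphere $\|z\|_V=R:=1+\|f\|_{V'}/\beta$, so the standard corollary of Brouwer's fixed point theorem produces $u_m\in V_m$ with $\Phi_m(u_m)=0$ and $\|u_m\|_V\le R$ uniformly in $m$.

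Next I would pass to a subsequence $u_m\rightharpoonup u$ weakly in $V$; for fixed $k$ and all $m\ge k$ the Galerkin identity $a(u_m;u_m,w_k)=(f,w_k)$ holds, and letting $m\to\infty$ the weak continuity A.2 gives $a(u;u,w_k)=(f,w_k)$, which extends to all $v\in V$ by density of $\bigcup_m V_m$ and continuity of $v\mapsto a(u;u,v)$. Since $u\in V$, the constraint $b(u,\mu)=0$ for all $\mu$ is automatic. To obtain the pressure, note that $v\mapsto(f,v)-a(u;u,v)$ is a bounded linear functional on $H_0^1(\Omega)^N$ vanishing on $\ker B=V$; the inf-sup condition A.3 is equivalent (Babu\v{s}ka--Brezzi / closed-range theorem) to surjectivity of $\mu\mapsto b(\cdot,\mu)$ onto the annihilator of $\ker B$, so there is $\lambda\in M$ with $b(v,\lambda)=(f,v)-a(u;u,v)$ for all $v$. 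Then $(u,\lambda)\in V\times\widehat{\mathbf{L}^2}(\Omega)$ solves \eqref{3.6H}.

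I expect the only genuinely delicate step to be the passage to the limit in the nonlinear term $a_1(u_m;u_m,w_k)$, which is quadratic in the merely weakly convergent sequence $\{u_m\}$: nothing forces convergence except the structural hypothesis A.2, and in concrete realizations verifying A.2 is itself the substantive point, done through a Rellich--Kondrachov compact embedding exactly as in Steps I--II of the proof of Theorem~\ref{Th0.1}. The remaining ingredients---the uniform bound from A.1, the Brouwer argument, and the multiplier recovery from A.3---are routine saddle-point machinery, and no contraction argument is needed here since only existence (not uniqueness) is asserted.
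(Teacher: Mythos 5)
The paper does not supply its own proof of this result---Theorem~\ref{ex} is explicitly presented as a restatement of Theorems 1.2 and 1.4 from Chapter 4 of Girault--Raviart \cite{girault2012finite}---and your argument is a faithful reconstruction of exactly that cited proof: reduction to the kernel $V$ of the divergence operator, a Galerkin scheme on finite-dimensional $V_m$, the a priori bound from ellipticity A.1, solvability of the discrete problems by the Brouwer-type corollary, passage to the limit via the weak continuity A.2, and recovery of the multiplier $\lambda$ from the inf-sup condition A.3 through the closed-range/Babu\v{s}ka--Brezzi identification of $\operatorname{range}(b(\cdot,\cdot))$ with the annihilator of $\ker B$. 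Your proposal is correct and takes essentially the same approach as the source the paper invokes, and you correctly flag that, in applications, the genuinely delicate hypothesis is A.2, which the paper verifies for its concrete trilinear form via Rellich--Kondrachov compactness in Steps I--II of Theorem~\ref{Th0.1}.
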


\section{Acknowledgement}

\noindent The author Pelin G. Geredeli would like to thank the National Science Foundation, and acknowledge her partial funding from NSF Grant DMS-2348312.

\bibliographystyle{plain}
\bibliography{reference}

\end{document}